\documentclass{amsart}[12]
\usepackage{graphicx} 
\usepackage[margin=1in]{geometry}
\usepackage{graphicx}
\usepackage{amsmath, amssymb, amsthm,amsfonts} 
\usepackage{lmodern}
\usepackage[T1]{fontenc}
\usepackage{fancyhdr}
\usepackage[usenames,dvipsnames]{xcolor} 
\usepackage{pdfcolmk}
\usepackage{tikz-cd}
\usepackage{mathtools}
\usepackage{mathrsfs}  
\usepackage{subfiles}
\usepackage{pdfpages}
\usepackage{quiver}

\usepackage{ytableau}

\usepackage{Style}
\usepackage{newclude}

\usepackage[maxbibnames=99,backend=biber, style=alphabetic]{biblatex}
\bibliography{RankSSH.bib}
\usepackage[colorlinks = true, 
linkcolor=black,
filecolor=magenta,
citecolor=magenta]{hyperref}

\newsavebox{\pullback}
\sbox\pullback{%
\begin{tikzpicture}%
\draw (0,0) -- (1ex,0ex);%
\draw (1ex,0ex) -- (1ex,1ex);%
\end{tikzpicture}}

\newcommand{\NS}{\operatorname{NS}}

\newcommand{\BG}{\operatorname{B \mathbb{G}}}

\newcommand{\per}{\operatorname{per}}

\newcommand{\Br}{\operatorname{Br}}

\newcommand{\Auteq}{\operatorname{Auteq}}
\newcommand{\Pf}{\operatorname{Pf}}

\title{Minimal Trivializing Isogenies for $\bG_m$-gerbes over abelian varieties and Period Index Problem}
\author{Ruoxi Li}

\begin{document}

\maketitle
\begin{abstract}
    For an abelian variety $X$ and $\alpha \in \Br(X)$, we propose a new invariance $\Ind_{SH}(\alpha)$ that refines the known period index relations. It is closely related to the geometry of $\cX$, the $\bG_m$-gerbe over $X$ that corresponds to $\alpha$: we study the minimal trivializing isogenies for $\cX$ via its $\mu_n$-lifts and the $1-$twisted semi-homogeneous vector bundles on $\cX$. As an application, we show that the period index conjecture holds true for products of elliptic curves of any dimension.
\end{abstract}

\section{Introduction}

For any smooth projective variety $X$, the period $\per(\alpha)$ is the order of $\alpha$ in the torsion group $\Br(X)$. The index $\ind(\alpha)$ is the greatest common divisor of the set of ranks  of $\alpha$-twisted vector bundles $\cX$, equivalently, it is the greatest common divisor of the set of degrees of Azumaya algebras of class $\alpha$.

Note that by \cite[Lemma 2.9]{hotchkiss2024period}, another definition of $\ind(\alpha)$ is that it is the smallest rank $\alpha$-twisted vector bundle can have on $\cX$.

The period–index problem asks how the index of a Brauer class $\alpha \in \Br(X)$ is controlled by its period, more precisely, it is to determine an integer $e$ such that $\ind(\alpha)|\per(\alpha)^e$. The (unramified) period-index conjecture says that for any smooth, connected, projective variety $X$ over an algebraically closed field $k$, and any $\alpha \in \Br(X)$, we have \[\ind(\alpha) | \per(\alpha)^{\dim X-1}.\] It is well-known that $\per(\alpha)| \ind(\alpha)$ (see e.g. \cite[Theorem 3]{antieau2015prime}), and that $\per(\alpha)$ and $\ind(\alpha)$ have the same prime factors (see e.g. \cite[Theorem 6]{antieau2015prime}). While this problem has been extensively studied, it remains subtle even for abelian varieties. However, it is a classical result that \[\ind(\alpha) | \per(\alpha)^{\dim{X}},\] which is by \cite{elencwajg1983projective} in the case of $k = \bC$; for arbitrary field, see, for example, \cite{hotchkiss2024period}.

In this paper, let $k$ be an algebraically closed field of characteristic 0, and let $X/k$ be an abelian variety of dimension $g$. Let $\cX$ be the $\bG_m$-gerbe corresponding to $\alpha \in \Br(X)$, and let $\cX_n$ be a $\mu_n$-gerbe corresponding to $f \in H^2(X, \mu_n)$.

From a geometric point of view, a natural way to study a Brauer class on an abelian variety is via isogenies: one seeks an isogeny $\pi: Y \to X$ such that $\pi^* \alpha =0$. The degree of such an isogeny measures how far $\alpha$ is from being trivial. This leads to the following invariant.

\begin{definition}
\label{definition}
    For $\alpha \in \Br(X)$, let $\ind_{SH}(\alpha)$ be the minimum degree of isogenies $\pi: Y \to X$ such that $\pi^*\alpha = 0$.

    For $f \in H^2(X, \mu_n)$, let $\Ann(f)$ be the minimum degree of isogenies $\pi: Y \to X$ such that $\pi^*f = 0$.
\end{definition}

\begin{remark}
    The notation $\Ind_{SH}(\alpha)$ stands for the fact that this integer is the smallest rank of $\alpha$-twisted semi-homogeneous vector bundles, which is a distinguished class of vector bundles on abelian varieties. This is by Theorem \ref{A}, \ref{B}, and \ref{C}.

    The advantage of considering the invariant $\Ann(f)$ is that it admits a concrete description in terms of alternating pairings on torsion subgroups, making it more accessible to explicit computation. 

    The corresponding notions of $\Ind_{SH}(\alpha)$ and $\Ann(f)$ in the complex tori case are studied in \cite{hotchkiss2025period} ($\Ind_{SH}(\alpha)$ is named as $\Ann(\alpha)$ there). In this case since $\NS(X) = 0$, we have $\Ind_{SH}(\alpha) = \Ann(f)$ where $f$ is the unique lift of $\alpha$ in $H^2(X, \mu_n)$ (assuming $\per(\alpha) | n$).
\end{remark}

For any integer $n$, the Kummer sequence gives the following short exact sequence

\[ 0 \to \frac{\NS(X)}{n \NS(X)} \to H^2(X, \mu_n) \to \Br(X)[n] \to 0.\]
By functoriality, the isogenies that kill any lift of $\alpha$ in $H^2(X, \mu_n)$ also kills $\alpha$, so $\Ind_{SH}(\alpha)$ is bounded above by the minimum of the set of $\Ann(f)$ where $f$ is taken to be any lift of $\alpha$. In this paper, we prove that this bound is sharp (Theorem \ref{C}), and that $\Ind_{SH}(\alpha)$ refines the known period-index divisibility relation (Theorem \ref{D}). 

As we mentioned before, $\Ind_{SH}(\alpha)$ and $\Ann(f)$ are related to the rank of $\alpha$-twisted semi-homogeneous vector bundles. The geometric reason for this is that $\Ind_{SH}(\alpha)$ and $\Ann(f)$ are precisely the degree of the minimal trivializing isogenies of $\bG_m$-gerbes and $\mu_n$-gerbes defined below respectively.

\begin{definition}
    We say $\pi: Y \to X$ is a trivializing isogeny of $\cX$ if $\cY := \cX \times_X Y$ is trivial over $Y$. We say a trivializing isogeny $\pi$ is minimal if there is no $Y' \not \cong Y$ such that $Y \to Y' \to X$ and $\cX \times_X Y'$ is trivial over $Y'$. 

    We say $\pi: Y \to X$ is a trivializing isogeny of $\cX_n$ if $\cY_n := \cX_n \times_X Y$ is trivial over $Y$. We say an trivializing isogeny $\pi$ is minimal if there is no $Y' \not \cong Y$ such that $Y \to Y' \to X$ and $\cX_n \times_X Y'$ is trivial over $Y'$. 
\end{definition}

\subsection{Minimal Trivializing Isogeny of $\mu_n$-gerbes}

Our main results describe minimal trivializing isogenies of $\mu_n$-gerbes in terms of twisted simple semi-homogeneous vector bundles. 

\begin{definition}
    We define the slope of a 1-twisted line bundle $\cL$ on $\cX_n$ to be $\delta(\cL) = \frac{\det{\cL}^{\otimes n}}{n}$.

    We define the slope of a 1-twisted vector bundle $\cE$ on $\cX_n$ to be $\delta{(\cE)} = \frac{\delta(\det(\cE))}{\rk(\cE)}$.
\end{definition}

\begin{remark}
    This definition agrees with the definition of the slope of twisted sheaves in \cite{krashen2008index}.
\end{remark}

\begin{definition}
    Let $\iota: \cX_n \to \cX$ be the pushout map of $\cX_n$ along $\mu_n \to \bG_m$. Any 1-twisted vector bundle $\cE_n$ on $\cX_n$ satisfies $\cE_n \cong \iota^* \cE$ for some $1$-twisted vector bundle $\cE$ on $\cX$. We say $\cE_n$ is semi-homogeneous if $\cE$ is semi-homogeneous on $\cX$ (c.f. \cite[Definition 1.0.1]{li2025point}).
\end{definition}

\begin{theorem}[Theorem 3.1]
\label{A}
    A 1-twisted vector bundle $\cE$ on $\cX_n$ with $\delta(\cE) = 0$ is simple semi-homogeneous if and only if $\cE \cong \pi_* \cL$ for some minimal trivializing isogeny $\pi: Y \to X$ and (1-twisted) $\cL$ on $Y \times_{X} \cX_n$ with $\delta(\cL) = 0$.
\end{theorem}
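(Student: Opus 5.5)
I would prove both directions by reducing to the untwisted theory of simple semi-homogeneous bundles on abelian varieties (Mukai), working on $\cX_n$ and pulling back along isogenies. A few preliminary observations come first. For an isogeny $\pi: Y \to X$ and a $1$-twisted sheaf $\cL$ on $\cY_n = Y\times_X \cX_n$, the pushforward $\pi_*\cL$ is a $1$-twisted bundle of rank $\deg \pi$. Slopes behave well under this operation: $\delta(\pi_*\cL)=0$ whenever $\delta(\cL)=0$, and $\delta(\pi^*\cE)=\pi^*\delta(\cE)$. Both follow from the Grothendieck–Riemann–Roch type formula $\det \pi_*\cL \cong \operatorname{Nm}(\cL)\otimes \det\pi_*\cO_Y$, where $\det \pi_*\cO_Y$ is torsion for an étale isogeny. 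Throughout I would use that $\pi^*\pi_*\cL \cong \bigoplus_{y\in \ker\pi} t_y^*\cL$ for the étale map $\pi$, and that twisted semi-homogeneity is detected after pullback along any isogeny (as in \cite{li2025point}).

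For the direction ($\Leftarrow$), let $\pi$ be a trivializing isogeny. Choosing a trivialization $\cY_n \simeq Y\times B\mu_n$ identifies $1$-twisted line bundles $\cL$ with $\delta(\cL)=0$ with ordinary line bundles $L$ on $Y$ whose $n$-th power is algebraically trivial (after normalizing), so these are essentially elements of $\Pic^0$ up to torsion. Then $\pi^*\pi_*\cL=\bigoplus_{y\in\ker \pi} t_y^*\cL$ is a sum of slope-zero line bundles, hence semi-homogeneous, and therefore so is $\pi_*\cL$. For simplicity I would compute $\operatorname{End}(\pi_*\cL)=\bigoplus_{y\in\ker\pi}\operatorname{Hom}(\cL,t_y^*\cL)$ and show that $t_y^*\cL\not\cong\cL$ for $y\neq 0$. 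Here minimality enters. If the stabilizer $K=\{y: t_y^*\cL\cong\cL\}\subset\ker\pi$ were nontrivial, then $\cL$ (after adjusting the theta-group/linearization obstruction) would descend to $Y/K$. That descent would trivialize $\cX_n$ already over $Y'=Y/K$, which contradicts minimality. The delicate point is that the obstruction to descending $\cL$ along $K$ is exactly an alternating pairing on $K$. I would argue that this pairing vanishing is equivalent to the gerbe being trivial on $Y/K$, which requires a careful comparison.

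For the direction ($\Rightarrow$), let $\cE$ be simple semi-homogeneous of rank $r$ with $\delta(\cE)=0$. I would follow Mukai's structure theorem. Consider $\Phi(\cE)=\{(x,\xi)\in X\times\hat X: t_x^*\cE\cong\cE\otimes P_\xi\}$, and take the subgroup $K=\{x: t_x^*\cE\cong \cE\}$, or rather its appropriate variant. Let $Y\to X$ be the isogeny dual to the one determined by this stabilizer data, so that $\cE$ splits on $Y$. A more direct route is the following. Semi-homogeneity with slope $0$ and simplicity give a line subbundle $\cL\subset \pi^*\cE$ for some isogeny $\pi$, with $\cE\cong\pi_*\cL$. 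This is the twisted analogue of Mukai's Theorem 5.8 ($E\cong \pi_*L$), which I expect to be available via \cite{li2025point}. The existence of a $1$-twisted line bundle on $\cY_n$ forces $\cY_n$ to be trivial, so $\pi$ is trivializing. Its degree equals $\rk\cE=r$.

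It remains to show that $\pi$ is minimal. If $\pi$ factored through a trivializing $Y'$ with $Y\to Y'$ of degree $d>1$, I would pick a twisted line bundle $\cL'$ on $\cY'_n$. Then $\pi'^*\cE\otimes\cL'^{-1}$ is an untwisted slope-zero simple-ish bundle on $Y'$. Its endomorphisms would then be too large: by Mukai it is a sum of $\Pic^0$-translates, which yields endomorphisms of $\cE$ beyond scalars, or non-simplicity via the projection formula. This contradicts simplicity. I expect the main obstacle to be exactly this minimality/descent correspondence in both directions, namely matching the stabilizer of $\cL$ under $\ker\pi$ with intermediate trivializing isogenies. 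The descent obstruction is a commutator pairing, and identifying it with the class of $\cX_n$ restricted to $Y'$ is where the real work lies.
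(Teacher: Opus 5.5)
Your outline is the paper's: present $\cE$ as $\pi_*\cL$ via the twisted Mukai structure theorem, use $\operatorname{End}(\pi_*\cL)\cong\bigoplus_{a\in\ker\pi}\Hom(a\cdot\cL,\cL)$, and relate stabilizers to intermediate trivializing isogenies through Proposition \ref{trivial}. However, two steps fail as written.

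\textbf{The trivializing step in ($\Rightarrow$).} Your claim that a $1$-twisted line bundle on $\cY_n$ forces $\cY_n$ to be trivial is false; it only gives essential triviality. A counterexample: take $\pi=\mathrm{id}$ and the tautological line bundle on the gerbe of $n$-th roots of a line bundle whose class is not in $n\NS(X)$. The remark after Theorem \ref{rank=ann} makes exactly this point. The hypothesis $\delta(\cE)=0$ is needed here. Each $a\cdot\cL$ has the same slope as $\cL$, so $\pi^*\cE\cong\bigoplus_{a}a\cdot\cL$ gives $\delta(\cL)=\pi^*\delta(\cE)=0$. Only then does Proposition \ref{trivial} make $\pi$ trivializing.

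\textbf{Minimality in ($\Rightarrow$).} Your sketch never pins down where $\delta=0$ enters, and it must: Proposition \ref{Prop: example} gives simple semi-homogeneous bundles pushed forward along non-minimal trivializing isogenies. A clean version runs as follows. Suppose $\pi$ factors as $Y\xrightarrow{\rho}Y'\to X$ with $\deg\rho>1$ and $\cX_n\times_X Y'$ trivial. Choose a slope-$0$ twisted line bundle $\cL'$ on it. Then $\cL\otimes\rho^*\cL'^{-1}\in\operatorname{Pic}^0(Y)=\rho^*\operatorname{Pic}^0(Y')$, so $\cL$ descends along $\rho$. Hence $a\cdot\cL\cong\cL$ for $0\neq a\in\ker\rho$, and $\cE$ has a non-scalar endomorphism.

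\textbf{The descent step in ($\Leftarrow$).} You propose descending $\cL$ along the full stabilizer $K=\{a\in\ker\pi : a\cdot\cL\cong\cL\}$ and defer the obstruction in $H^2(K,k^\times)$ to a ``careful comparison''. That route cannot close the argument. The obstruction is unchanged by twisting $\cL$ with $\operatorname{Pic}^0(Y)$, and in general it is nonzero. For example, take the gerbe of $n$-th roots of $\cO_E(0)$ on an elliptic curve with $\pi=[n]$: one finds $K=E[n]=\ker\pi$, with obstruction the Weil pairing. In that situation $\cL$ simply does not descend to $Y/K$ (equivalently, the gerbe is not trivial over $Y/K$), so your comparison yields no contradiction with minimality.

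The missing idea is to pass to a cyclic subgroup. Suppose $a\cdot\cL\cong\cL$ with $a\neq 0$ of order $m$. Since $k$ is algebraically closed, $H^2(\bZ/m\bZ,k^\times)=k^\times/(k^\times)^m=0$, so the isomorphism can be rescaled into a descent datum for $\langle a\rangle$. The descended $1$-twisted line bundle on $\cX_n\times_X (Y/\langle a\rangle)$ still has slope $0$, because pullback along $Y\to Y/\langle a\rangle$ is injective on $\NS\otimes\mathbb{Q}$. Proposition \ref{trivial} then makes $Y/\langle a\rangle\to X$ trivializing, contradicting minimality. This is what justifies the paper's one-line equivalence ``$a\cdot\cL\not\cong\cL$ for all $a\neq 0$ iff $\cL$ descends to no intermediate $Y'$''. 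With it and the slope computation above, both directions reduce to the single statement that $\pi_*\cL$ is simple iff $\pi$ is minimal.
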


Furthermore, we can compute the degree of $\pi$ explicitly.
\begin{theorem}[Theorem \ref{Ann{f}=h(f)}]
\label{B}
    Given any $\pi$ a minimal trivializing isogeny of $\cX_n$, we have \[\deg{\pi} = \Ann(f) = h(f),\] where $h(f)$ is the size of the image of the associated skew symmetric function $X[n] \to X^{\vee}[n]$.

    In particular, all minimal trivializing isogenies of $\cX_n$ have the same degree.
\end{theorem}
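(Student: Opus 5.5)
We identify $H^2(X,\mu_n)$ with the group of $\mu_n$-valued alternating forms on the $n$-torsion. Concretely, a class $f$ gives a skew-symmetric homomorphism $\phi_f: X[n]\to X^\vee[n]$, equivalently an alternating pairing $e_f$ on $X[n]$ with values in $\mu_n$, which is the standard description of $H^2_{et}(X,\mu_n)\cong \Hom(\wedge^2 X[n],\mu_n)$. The plan is to prove $\deg\pi = h(f)$ for every minimal trivializing isogeny $\pi$. This gives $\Ann(f)=h(f)$ automatically: a minimum-degree trivializing isogeny is minimal, since any proper factorization $Y\to Y'\to X$ with $\cX_n\times_X Y'$ trivial would have smaller degree.

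\emph{Step 1: the kernel of a trivializing isogeny is isotropic.} Take an isogeny $\pi: Y\to X$ with kernel $K$, and form the dual isogeny $\hat\pi: X\to Y$ with $\pi\circ\hat\pi$ multiplication by $\deg$ on the appropriate factor. The pullback $\pi^* f$ corresponds to the form $e_f$ restricted along $\pi: Y[n]\to X[n]$, after accounting for how $\pi$ maps $n$-torsion. One can reduce to the case where $K$ is killed by $n$ by replacing $Y$ through a factorization, or simply work with $\pi^{-1}(X[n])$. The claim is that $\pi^*f=0$ if and only if the subgroup $\pi(Y[n])\subseteq X[n]$ is isotropic for $e_f$, and more precisely iff $\pi(Y[n])\subseteq \ker\phi_f$ up to the appropriate duality. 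I would prove this with the Weil-pairing compatibility $e_{\pi^*f}(y,y')=e_f(\pi y,\pi y')$. The image $\pi(Y[n])$ is the annihilator of $\ker\pi$ under the Weil pairing, restricted to the relevant torsion.

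\emph{Step 2: the counting.} From Step 1, $\pi$ trivializes $f$ exactly when $K\cap X[n]$ contains, roughly, the orthogonal complement $(\ker\phi_f)^\perp=\operatorname{im}$ of the dual map. Identifying $X[n]$ with its dual via the Weil pairing, this complement is the image of $\phi_f$, which has size $h(f)$. The minimal kernels are then exactly the subgroups $K=\operatorname{im}\phi_f$ transported into $X$, or more generally subgroups of order $h(f)$ with the required orthogonality. Every minimal trivializing isogeny has a kernel containing a subgroup of order $h(f)$ that already trivializes, so minimality forces equality and $\deg\pi=h(f)$.

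\emph{Alternative via Theorem~\ref{A}.} If the direct route gets messy, I would use Theorem~\ref{A} instead. For minimal $\pi$, the bundle $\cE=\pi_*\cL$ is simple semi-homogeneous of slope $0$ and rank $\deg\pi$. Simple semi-homogeneous twisted bundles of fixed slope have rank determined by the slope and the class, as in the Mukai-type classification from \cite{li2025point}. So all minimal $\pi$ have the same degree, and it remains to compute it once, for the explicit isogeny with kernel built from $\operatorname{im}\phi_f$.

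The main obstacle is Step 1: correctly matching the cohomological pullback $\pi^*$ on $H^2(-,\mu_n)$ with the restriction of alternating forms when $\ker\pi$ is not contained in $X[n]$. The key point to verify is that a minimal trivializing isogeny always has $n$-torsion kernel. I expect this to follow from the factorization and the fact that the part of the kernel of $\pi$ prime to $n$ acts bijectively on $n$-torsion, so the $\mu_n$-class cannot see it.
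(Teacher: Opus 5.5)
\paragraph{There is a genuine gap in your Step 2.} Your Step 1 is correct and matches the paper. $\pi^*f$ is the form $(y,y')\mapsto e_f(\pi y,\pi y')$ on $Y[n]$, so $\pi$ trivializes $\cX_n$ iff $S=\pi(Y[n])$ is isotropic. Two small corrections: it is $S$ that is isotropic, not $\ker\pi\subset Y$; and $S$ is the annihilator of $\ker(\pi^\vee)\cap X^\vee[n]$, not of $\ker\pi$.

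Isotropy of $S$ is not a containment condition relative to a fixed subgroup, so there is no distinguished ``minimal kernel''. Write $\pi\circ q=[n]_X$ and $G=\ker q=\pi(Y[n])$. Your candidate $\ker\pi\cong\operatorname{im}\phi_f$ then corresponds to $G=\ker\phi_f$, the radical. This isogeny does trivialize, but it is not minimal, and its degree $|\operatorname{im}\phi_f|$ is the square of the correct value.

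A concrete check: take $g=1$, $n=p$, and $f$ the class of a degree-one line bundle. Then $\phi_f$ is bijective, so $|\operatorname{im}\phi_f|=p^2$. Yet every cyclic $G\subset X[p]$ of order $p$ is isotropic, and the isogeny $X/G\to X$ trivializes with degree $p$. So $h(f)$ must be read as $\sqrt{|\operatorname{im}\phi_f|}$, as in the paper's definition and Lemma \ref{ann(f)}. Read literally as ``size of the image'', the statement is false, and your Step 2 is aiming at that version.

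Your crux sentence, that every minimal trivializing isogeny ``has a kernel containing a subgroup of order $h(f)$ that already trivializes'', is exactly what has to be proved. As phrased it also does not parse: subgroups of $\ker\pi\subset Y$ give isogenies out of $Y$, not into $X$.

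\paragraph{Your alternative route.} Its structure is the paper's proof: Theorem \ref{rank=ann}, plus uniqueness of ranks of simple semi-homogeneous slope-$0$ bundles from \cite{li2025point}. But the one explicit computation must be done as in Lemma \ref{ann(f)}, not with an isogeny modeled on $\operatorname{im}\phi_f$. The upper bound uses $Y=X/G$ with $G$ maximal isotropic. The lower bound uses $|\ker\pi|\ge|Y[n]|/|S|$ together with $|S|\le|X[n]|/\sqrt{|\operatorname{im}\phi_f|}$.

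\paragraph{How to repair the direct route.} Once repaired it is more elementary than the paper's proof, since it avoids semi-homogeneous bundles entirely.
\begin{enumerate}
\item If $\pi$ trivializes, so does the factor $\pi''\colon Y/n\ker\pi\to X$, because it has the same image of $n$-torsion. Indeed, a lift $y$ of an $n$-torsion point of $Y/n\ker\pi$ satisfies $ny=nk$ with $k\in\ker\pi$, and $\pi(y)=\pi(y-k)$. Hence a minimal $\pi$ has $n\ker\pi=0$, i.e.\ $\pi\circ q=[n]_X$. Your prime-to-$n$ remark does not give this, since $\ker\pi$ could a priori contain $\bZ/p^2\bZ$ with $p\mid n$ and $p^2\nmid n$.
\item For such $\pi$ one has $\deg\pi=|X[n]|/|G|$ with $G=\pi(Y[n])$. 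An isotropic $G'\supsetneq G$ would give a proper trivializing factor $X/G'\to X$ through which $\pi$ factors. So minimality forces $G$ to be maximal isotropic.
\item Maximal isotropic forces $G=G^\perp$: any $x\in G^\perp\setminus G$ could be adjoined, since $e_f(x,x)=1$. The paper's counting lemma then gives $|G|=|X[n]|/\sqrt{|\operatorname{im}\phi_f|}$, hence $\deg\pi=\sqrt{|\operatorname{im}\phi_f|}$ for every minimal $\pi$.
\end{enumerate}
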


\begin{remark}
    By \cite[Theorem 6.1.4]{li2025point}, the rank of simple semi-homogeneous vector bundles (on $\cX$, and hence on $\cX_n$, see the discussion in Section 2) is a numerical invariant. Theorem \ref{A} and \ref{B} implies that $\deg{\pi} = \Ann(f) = h(f)$ is equal to the rank of simple semi-homogeneous vector bundles of slope 0 on $\cX_n$.  
\end{remark}

\subsection{Minimal Trivializing Isogeny of $\bG_m$-gerbes}

The following theorem allows us to compute $\Ind_{SH}(\alpha)$ in terms of the lifts of $\alpha$ in $H^2(X, \mu_n)$.

\begin{theorem}[Corollary \ref{Ann(alpha)}]
\label{C}
    We have $$\Ind_{SH}(\alpha) = \min \{\Ann(f) | f \in H^2(X, \mu_{\per(\alpha)^{s}}), s \in \bN, \text{and the image of $f$ in $\Br(X)$ is $\alpha$}\}.$$ 

    If we further assume $X$ is principally polarized with Picard rank 1 and $\per(\alpha)$ is a prime number, then $$\Ind_{SH}(\alpha) = \min \{\Ann(f) | f \in H^2(X, \mu_{\per(\alpha)}), \text{and the image of $f$ in $\Br(X)$ is $\alpha$}\}.$$
\end{theorem}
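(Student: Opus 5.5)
The plan is to prove the two inequalities separately. The inequality $\le$ was already noted in the introduction: if $f \in H^2(X,\mu_{\per(\alpha)^s})$ lifts $\alpha$ and $\pi^*f=0$, then $\pi^*\alpha=0$ by functoriality of the Kummer sequence. Hence $\Ind_{SH}(\alpha)\le \Ann(f)$ for every such $f$.

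For $\ge$, fix an isogeny $\pi: Y\to X$ of minimal degree $d=\Ind_{SH}(\alpha)$ with $\pi^*\alpha=0$. I would produce a lift $f$ of $\alpha$ with $\pi^*f=0$, so that $\Ann(f)\le d$.

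\textbf{Step 1.} Since $\pi^*\alpha=0$, the pullback $\cY=\cX\times_X Y$ is trivial and carries a $1$-twisted line bundle $\cL$. Then $\cE=\pi_*\cL$ is a $1$-twisted vector bundle of rank $d$ on $\cX$. It is semi-homogeneous, since pushforwards of line bundles along isogenies are semi-homogeneous (cf. \cite{li2025point}).

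\textbf{Step 2.} Take $n=\per(\alpha)^s$ with $s$ large, so that $\det\cE$, a $d$-twisted line bundle, can be compared with an $n$-th power. I would choose a $1$-twisted line bundle datum reducing the $\bG_m$-gerbe to a $\mu_n$-gerbe $\cX_n$ with class $f$ lifting $\alpha$. Concretely, rigidify $\cX$ by the $n$-twisted line bundle $\cM$ coming from $\cL$ and $\det\cE$; this requires $n$ to be divisible by the relevant degrees. Modifying $f$ by an element of $\NS(X)/n$, which amounts to twisting $\cL$ by a line bundle on $Y$, I would arrange that $\iota^*\cE$ has slope $\delta=0$ on $\cX_n$.

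\textbf{Step 3.} Apply Theorem \ref{A} to a simple semi-homogeneous summand of $\iota^*\cE$. This writes it as a pushforward along a minimal trivializing isogeny $\pi'$ of $\cX_n$. Theorem \ref{B} then gives $\Ann(f)=\deg\pi'=\rk$ of that summand, which is at most $d$. Combined with the first paragraph, this gives equality.

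The main obstacle is Step 2: choosing $s$ and the lift so that $\pi^*f$ is trivial as a $\mu_n$-gerbe, not merely as a $\bG_m$-gerbe. The obstruction to this lies in $\NS(Y)/n$, namely the class of $\cL^{\otimes n}$ relative to $\pi^*\cM$. The idea is to kill it by enlarging $n$ by powers of $\per(\alpha)$ and adjusting $f$ by $\NS(X)/n$. The difficulty is that $\pi^*\NS(X)$ need not surject onto $\NS(Y)$, so one must use that all primes involved divide $\per(\alpha)$, together with the fact that $d$ has the same prime factors as $\per(\alpha)$.

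For the second statement, assume $X$ is principally polarized with Picard rank $1$ and $\per(\alpha)=p$ is prime. Then $\NS(X)=\bZ\theta$, and I would show directly that any lift $f'$ at level $p^s$ can be replaced by one at level $p$ without increasing $h$. I would compute $h(f)$ via the skew pairing $X[n]\to X^\vee[n]$ of Theorem \ref{B}: the contribution of $a\theta$ can be separated, and one reduces $a$ modulo $p$ using the identification $X[p^s]\cong X^\vee[p^s]$ given by $\theta$. This is an explicit linear-algebra minimization, but I expect it to be fiddly.
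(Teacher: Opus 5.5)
\textbf{Verdict: there is a genuine gap.} Your Step 2 is the whole content of the inequality $\Ind_{SH}(\alpha)\ge\min\Ann(f)$, and you leave it as ``the idea''. As written it is not an argument. Adjusting the lift by $b\in\NS(X)/n$ changes $\pi^*f$ only by $\pi^*b$, so by itself it cannot touch the obstruction class in $\NS(Y)/(n\NS(Y)+\pi^*\NS(X))$ from Remark~\ref{diagram}. You also give no reason why enlarging $n$ should kill that class.

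The missing observation is Proposition~\ref{trivializing lifts}. Work at level exactly $d=\deg\pi$, and take the lift $f\in H^2(X,\mu_d)$ given by the $\PGL_d$-torsor of $\cE=\pi_*\cL$; equivalently, rigidify $\cX$ by the $d$-twisted line bundle $\det\cE$. Then $\pi^*f$ is the class of $\det\bigl(\bigoplus_{a\in\ker\pi}a\cdot\cL\bigr)$. Each $a\cdot\cL$ has the same class in $\NS(Y)$ as $\cL$, so this class is $d[\cL]\in d\NS(Y)$. Hence $\pi^*f=0$ and $\Ann(f)\le d$ by definition, with no adjustment and no need for Theorems~\ref{A} and~\ref{B}. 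Similarly, if $d\mid n$ and you rigidify by $(\det\cE)^{\otimes n/d}$, then $(\det\iota^*\cE)^{\otimes n}$ is trivial, so $\delta(\iota^*\cE)=0$ automatically. So your Step 3 never needed $\pi^*f=0$; it only needed this.

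What remains is the level: the argument produces a lift in $H^2(X,\mu_d)$, not in $H^2(X,\mu_{\per(\alpha)^s})$. You bypass this by invoking ``the fact that $d$ has the same prime factors as $\per(\alpha)$''. That fact is not available here: in the paper it is Corollary~\ref{divisibility}(b), a consequence of the very statement you are proving. The paper handles the level with Lemma~\ref{basic inequalities}:
\begin{itemize}
\item part (b), via the Chinese remainder theorem, discards the part of the level prime to $\per(\alpha)$ without increasing $\Ann$;
\item part (a), via $|\im tM|=|\im M|$, moves the lift up to level $\per(\alpha)^s$.
\end{itemize}

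Alternatively, you can prove $d\mid\per(\alpha)^e$ directly. Let $K'$ be the prime-to-$\per(\alpha)$ part of $\ker\pi$ and $m=|K'|$. Then $\pi$ factors through $Y/K'$, and $[m]_{Y/K'}$ factors through $Y$. Since $[m]^*$ acts by $m^2$, which is invertible, on $\Br(Y/K')[\per(\alpha)]$, the class $\alpha$ already dies on $Y/K'$. Minimality of $d$ then forces $K'=0$.

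With that in hand, your enlarging idea also becomes a proof. Two facts are needed:
\begin{itemize}
\item The obstruction at level $tn$ is $t$ times the one at level $n$, because $\mu_n\hookrightarrow\mu_{tn}$ induces multiplication by $t$ on $\NS/n\to\NS/tn$ and the inclusion $\Br[n]\subseteq\Br[tn]$.
\item $d^2\NS(Y)\subseteq\pi^*\NS(X)$, because $q\circ\pi=[d]_Y$ for some isogeny $q$.
\end{itemize}
So any power $t$ of $\per(\alpha)$ with $d^2\mid t$ kills the obstruction.

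Your plan for the second statement points in the same direction as the paper's Lemma~\ref{basic inequalities}(c), which does a case analysis on the coefficient of the principal polarization modulo $p$. As written, though, it is only a sketch.
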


\begin{remark}
    As we mentioned after definition \ref{definition} of $\Ind_{SH}(\alpha)$, the above three theorems show that $\Ind_{SH}(\alpha) = \min \{\rk{\cE} | \cE \text{ is an $\alpha$-twisted semi-homogeneous vector bundle} \}$.
\end{remark}

In contrast to the $\mu_n$-gerbe case, the behavior of minimal trivializing isogenies for $\bG_m$-gerbes is more subtle. Although Theorem \ref{B} expresses $\Ind_{SH}(\alpha)$ in terms of $\mu_n$-lifts, minimal trivializing isogenies need not all have the same degree. In Section 4.2, we construct explicit examples where there exist minimal trivializing isogenies of strictly larger degree than $\Ind_{SH}(\alpha)$.

\subsection{Relation with Period-Index Problem}
As a consequence, one obtains a refinement of the period-index relation.

\begin{theorem}[Corollary \ref{divisibility}]
\label{D}
    Given any $f \in H^2(X, \mu_{\per(\alpha)})$ whose image is $\alpha$ in $\Br(X)$, we have

    \begin{enumerate}
        \item $\per(\alpha) | \ind(\alpha) | \Ann(f) | \per(\alpha)^{g}$;
        \item $\ind(\alpha) | \Ind_{SH}(\alpha) | \per(\alpha)^e$ for some integer $e$;
        \item $\Ind_{SH}(\alpha) \leq \Ann(f)$.
    \end{enumerate}
     If we further assume $\per(\alpha)$ is a prime number, we have \[\per(\alpha) | \ind(\alpha) | \Ind_{SH}(\alpha) | \Ann(f) | \per(\alpha)^{g}.\]
  
\end{theorem}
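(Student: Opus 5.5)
Throughout, fix $n=\per(\alpha)$ and a lift $f\in H^2(X,\mu_n)$ of $\alpha$. Let $\cX_n$ be the corresponding $\mu_n$-gerbe and $\iota:\cX_n\to\cX$ the pushout. The plan is to get every divisibility from Theorems \ref{A}, \ref{B} and \ref{C}, together with the standard facts $\per\mid\ind$ and "same prime factors" quoted in the introduction.

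\textbf{Step 1: part (1).} The divisibility $\per(\alpha)\mid\ind(\alpha)$ is classical. For $\ind(\alpha)\mid\Ann(f)$, take a minimal trivializing isogeny $\pi:Y\to X$ of $\cX_n$ and a $1$-twisted line bundle $\cL$ of slope $0$ on $Y\times_X\cX_n$. By Theorem \ref{A}, $\pi_*\cL$ is a $1$-twisted vector bundle on $\cX_n$ of rank $\deg\pi$. By Theorem \ref{B} this rank equals $\Ann(f)$. Pushing $\pi_*\cL$ forward to $\cX$ (any $1$-twisted bundle on $\cX_n$ is $\iota^*$ of an $\alpha$-twisted bundle of the same rank) produces an $\alpha$-twisted bundle of rank $\Ann(f)$. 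Hence $\ind(\alpha)\mid\Ann(f)$.

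For $\Ann(f)\mid\per(\alpha)^g$, use $\Ann(f)=h(f)$, the size of the image of the skew map $X[n]\to X^\vee[n]$. This image is a subgroup of $X^\vee[n]\cong(\bZ/n)^{2g}$. Because the map is alternating, I expect its image to carry a nondegenerate alternating pairing, so it has the form $H\oplus H^\vee$ with $|H|$ dividing $n^g$. Therefore $h(f)=|H|^2$ divides $n^{2g}$, and it remains to sharpen this to $n^g$.

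\textbf{Main obstacle: the bound $n^g$.} Consider the isogeny $n_X:X\to X$. It kills $f$, since $f$ is $n$-torsion in the cohomology of the abelian variety and $n_X^*$ acts as $n^2$ on $H^2$. However, $\deg n_X=n^{2g}$, which is too large. The better choice is the isogeny with kernel a Lagrangian subgroup $L\subset X[n]$ for the pairing. Pulling back along $X\to X/L$, or more precisely along the dual construction, should kill $f$ with degree $n^{2g}/|L|=n^g$ in the relevant orientation. Minimality plus Theorem \ref{B} (all minimal trivializing isogenies have degree $\Ann(f)$, and any trivializing isogeny factors through a minimal one) then gives $\Ann(f)\mid n^g$. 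I expect this Lagrangian construction to be the main technical point.

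\textbf{Step 2: parts (2) and (3).} Part (3) is immediate: $\Ind_{SH}(\alpha)$ is a minimum over isogenies killing $\alpha$, and an isogeny killing $f$ kills $\alpha$. For part (2), Theorem \ref{C} gives $\Ind_{SH}(\alpha)=\Ann(f')$ for some lift $f'\in H^2(X,\mu_{n^s})$. Applying Step 1 with $n^s$ in place of $n$ gives $\ind(\alpha)\mid\Ann(f')\mid n^{sg}$. This requires the Step 1 argument to work for any lift in $H^2(X,\mu_m)$, which it does. Thus (2) holds with $e=sg$.

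\textbf{Step 3: the prime case.} When $\per(\alpha)=p$ is prime, Theorem \ref{C}'s minimum can be compared with lifts at level $p$. Every number involved divides a power of $p$, so each is itself a power of $p$. For powers of $p$, the inequality $\Ind_{SH}(\alpha)\le\Ann(f)$ from (3) upgrades to $\Ind_{SH}(\alpha)\mid\Ann(f)$. Combining this with (1) and (2) gives the full chain
\[
p\mid\ind(\alpha)\mid\Ind_{SH}(\alpha)\mid\Ann(f)\mid p^g.
\]
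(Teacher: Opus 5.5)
The argument for $\ind(\alpha)\mid\Ann(f)$, parts (2) and (3), and the prime case all match the paper. Your observation in the prime case, that every quantity is a power of $p$ so $\le$ upgrades to $\mid$, is exactly the justification the paper leaves implicit.

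The real problem is the step $\Ann(f)\mid\per(\alpha)^g$. You took $h(f)=|\im M|$ from the introduction's phrasing of Theorem \ref{B}. The actual normalization (the definition in Section 3.2 and Lemma \ref{ann(f)}) is $\Ann(f)=h(f)=\sqrt{|\im M|}$. Under your reading, the sentence ``it remains to sharpen this to $n^g$'' is false. For nondegenerate $f$ one has $|\im M|=n^{2g}$, which does not divide $n^g$, and your own degree-$n^g$ trivializing isogeny would then contradict Theorem \ref{B}. With the correct normalization there is no obstacle. Your observation $\im M\cong X[n]/\ker M\cong H\oplus H^\vee$ gives $\Ann(f)=|H|$, and $|H|^2\mid n^{2g}$ forces $|H|\mid n^g$. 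This is exactly the paper's one-line proof.

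Your fallback Lagrangian route is a legitimate alternative. It needs only that all minimal trivializing isogenies have degree $\Ann(f)$, and not the square-root formula. As written, however, it needs two repairs.

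\textbf{Direction of the isogeny.} The quotient $X\to X/L$ points the wrong way. The right map is $\pi\colon Y=X/L\to X$ induced by $[n]_X$. It has $\ker\pi=X[n]/L$, $\deg\pi=n^{2g}/|L|$, and $\pi(Y[n])=L$. So $\pi^*f$ is the restriction of $f$ to $L$, which vanishes because $L$ is isotropic; this is the first half of Lemma \ref{ann(f)}.

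\textbf{Size of $L$.} When $f$ is degenerate, a maximal isotropic subgroup has order $n^{2g}/\sqrt{|\im M|}>n^g$, so ``degree $n^{2g}/|L|=n^g$'' fails for it. Instead take $L=\langle e_1,\dots,e_g\rangle$ from a symplectic normal form $\mathrm{diag}(d_iJ)$ of $f$ on $X[n]\cong(\bZ/n\bZ)^{2g}$, as used in Lemma \ref{Pf}. This $L$ is isotropic of order exactly $n^g$.

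Then factor $\pi$ through a minimal trivializing isogeny and apply Theorem \ref{B} to get $\Ann(f)\mid n^g$.
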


\begin{remark}
    When $\ind(\alpha) = \Ind_{SH}(\alpha)$, we can use $\Ind_{SH}(\alpha)$ to give a strict bound of the period-index problem, while when $\Ind_{SH}(\alpha) = \per(\alpha)^g$ such strategy fails to prove the period-index conjecture. We see in Section 4 that both situations occur. Moreover, $\ind(\alpha)$, $\Ind_{SH}(\alpha)$, and $\per(\alpha)^g$ can be different, making $\Ind_{SH}(\alpha)$ a genuine refinement in the period-index relation.
\end{remark}

The theory of $\Ind_{SH}(\alpha)$ and $\Ann(f)$ has some nice applications. For example, in section 5, we show that any finite product of elliptic curves satisfies the period-index conjecture.

\begin{theorem}[Theorem \ref{Elliptic}]
    Assume $X = E_1 \times \cdots E_g$. Given any $\alpha \in \Br(X)$, let $n = \per(\alpha)$, there exists a lift $f \in H^2(X, \mu_n) \cong H^2(X, \bZ/{n\bZ})$ of $\alpha$ such that $\Ann(f) | n^{g-1}$. In particular, $\ind(\alpha) | n^{g-1}$.
\end{theorem}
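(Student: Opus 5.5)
We prove the statement using Theorem \ref{B}, which identifies $\Ann(f)$ with $h(f)$, the size of the image of the skew-symmetric map $\phi_f: X[n]\to X^\vee[n]$ attached to $f$. Since $X=E_1\times\cdots\times E_g$, the K\"unneth formula gives
\[H^2(X,\bZ/n)\cong \bigoplus_i H^2(E_i,\bZ/n)\oplus \bigoplus_{i<j} H^1(E_i,\bZ/n)\otimes H^1(E_j,\bZ/n).\]
Under the identification $X[n]\cong\prod_i E_i[n]$, each $f$ corresponds to an alternating form $\omega_f$ on $X[n]\cong(\bZ/n)^{2g}$ (via the Weil pairing, $X^\vee[n]\cong X[n]^\vee$), and $h(f)=|X[n]|/|\ker\omega_f|$, the size of the image of the associated map. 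The task is therefore to choose a lift $f$ of $\alpha$ whose form $\omega_f$ has kernel of order at least $n^2$, since then $h(f)=n^{2g}/|\ker\omega_f|$ divides $n^{2g-2}$. This is not yet $n^{g-1}$, so the goal has to be refined: the image of $\omega_f$ is an abelian group $\bigoplus_k(\bZ/d_k)^2$ with the $d_k$ the elementary divisors, and $h(f)=\prod_k d_k^2$. To get $n^{g-1}$ we need a different count. I expect the paper's $h(f)$ to be normalized as the square root of this quantity, i.e.\ the Pfaffian-type order $\prod_k d_k$. Indeed $\Ann(f)\le n^g$ in Theorem \ref{D}(1) forces this normalization, since the full image can have order $n^{2g}$. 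So I would work with $h(f)=\prod_{k=1}^g d_k$, where $\omega_f$ has symplectic invariants $d_1\mid\cdots\mid d_g$ dividing $n$. The target $h(f)\mid n^{g-1}$ then follows once we arrange $d_1=1$; the case $d_1=0$ should be avoided, or handled as $h$ dropping further.

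The freedom in choosing a lift comes from the Kummer sequence: $f$ can be changed by any element of $\NS(X)/n$. For a product of elliptic curves, $\NS(X)$ contains the classes of the fibers of the projections $X\to E_i$, which give the diagonal summands $H^2(E_i,\bZ/n)$, together with graphs of homomorphisms, which give elements of $\Hom(E_i,E_j)$ inside the cross terms $H^1(E_i)\otimes H^1(E_j)$. So $\alpha$ determines $f$ modulo these, and the transcendental part lives in $\bigoplus_{i<j}$ (cross terms) modulo $\Hom(E_i,E_j)/n$.

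\textbf{Key step.} Consider the restriction of $\omega_f$ to $E_1[n]\cong(\bZ/n)^2$. This restriction is the component of $f$ in $H^2(E_1,\bZ/n)\cong\bZ/n$. Because this component comes entirely from $\NS$, we may adjust it freely: change $f$ by a multiple of the fiber class pulled back from $E_1$, so that $\omega_f(e_1,e_1')=u$ for any prescribed $u\in\bZ/n$, where $e_1,e_1'$ is a symplectic basis of $E_1[n]$. Choosing $u=1$, or more precisely choosing $u$ so that $\omega_f$ restricted to $E_1[n]$ is unimodular, makes $E_1[n]$ a unimodular symplectic subspace. Then $X[n]=E_1[n]\oplus E_1[n]^{\perp}$ and $\omega_f=\omega|_{E_1[n]}\oplus\omega'$, where $\omega'$ is an alternating form on $(\bZ/n)^{2g-2}$. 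This gives $h(f)=1\cdot h(\omega')$, and $h(\omega')$ divides $n^{g-1}$ since all of its invariants divide $n$. Combining this with Theorem \ref{D}(1), $\ind(\alpha)\mid\Ann(f)\mid n^{g-1}$.

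\textbf{Main obstacle.} The hardest part is making the description of $h(f)$ from Theorem \ref{B} precise, specifically confirming the Pfaffian normalization and checking that the Kummer image of the fiber class of $E_1$ acts on $\omega_f$ exactly by adding a multiple of the Weil pairing on the $E_1[n]$ block. That check requires a compatibility between the skew map attached to $c_1(L)\bmod n$ and $\phi_L|_{X[n]}$, which should follow from the construction in Section 3. One further point needs verifying: changing the diagonal entry also alters the complement form $\omega'$, but only through the orthogonal projection, and this does not affect the divisibility of its invariants by $n$.
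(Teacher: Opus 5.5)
There is a genuine gap, and it is exactly the normalization you flagged as the main obstacle: you have it inverted. By Lemma~\ref{ann(f)}, $h(f)=\sqrt{|\operatorname{im} M_f|}$. If $M_f$ is put in block form with entries $d_1,\dots,d_g\in\mathbb{Z}/n\mathbb{Z}$, then $h(f)=\prod_k n/\gcd(d_k,n)$. So a unimodular block contributes the \emph{largest} possible factor $n$, and only a block with $d_k\equiv 0$ contributes $1$.

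Two quick checks confirm this. For $g=1$ and $f$ a generator of $H^2(E,\mu_n)$, the map $M_f\colon E[n]\to E^\vee[n]$ is an isomorphism, so $\operatorname{Ann}(f)=n$. The zero class has all $d_k\equiv 0$ and $\operatorname{Ann}=1$.

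Consequently your key step, which makes $\omega_f|_{E_1[n]}$ unimodular and splits $X[n]=E_1[n]\oplus E_1[n]^{\perp}$, gives $h(f)=n\cdot h(\omega')$, not $1\cdot h(\omega')$. Combined with $h(\omega')\mid n^{g-1}$, this only yields $h(f)\mid n^{g}$, which is the classical bound in Theorem~\ref{D}(1). Your first instinct, to make the form as degenerate as possible, was the right one; the key step moves in the opposite direction.

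What you actually need is a lift with $\Pf(M_f)\equiv 0 \pmod n$. This forces $n\mid\prod_k\gcd(d_k,n)$ and hence $h(f)\mid n^{g-1}$ (Lemma~\ref{Pf}). Adjusting a single diagonal block cannot achieve this, because cross terms between $E_1$ and the other factors can keep the form nondegenerate whatever the $E_1$ block is. You must use the coefficients of all $g$ classes $H_i$.

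Your orthogonal splitting is in fact the right mechanism. It is the Schur-complement identity $\Pf(N)=\Pf(J)\,\Pf(A+BJ^{-1}B^{T}+\sum_{i\le g} a_iM_{H_i})$ in the paper's Lemma~\ref{induction of Pf}. Adding $a_iH_i$ with $i\ge 2$ does not change $E_1[n]^{\perp}$. It changes $\omega'$ by exactly $a_iH_i$, transported along the projection $E_1[n]^{\perp}\cong E_2[n]\oplus\cdots\oplus E_g[n]$. So you can iterate: split off $g-1$ unimodular blocks, and at the final step choose the last coefficient so that the remaining $2\times 2$ block is \emph{zero}. That gives $h(f)=n^{g-1}$.

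As written, however, your argument stops after one unimodular block and misreads its contribution as $1$, so it does not prove the statement.
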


\begin{remark}
    One might hope to be able to prove the period-index conjecture for arbitrary abelian varieties by computing $\Ind_{SH}(\alpha)$. Unfortunately, it does not work in some examples when the Néron–Severi group is too small; see Example \ref{counterexample} where $\NS(X) \cong \bZ$. 
    
    The fact that this method can always be used for products of elliptic curves is that the Néron–Severi group is sufficiently large in this case, which makes the Pfaffian condition flexible enough to solve; see section 5 for more details. We expect that similar phenomena can occur for other abelian varieties with sufficiently nice NS groups.
\end{remark}

\subsection{Funding}
The author was supported by a departmental fellowship funded by The McBeth Family Fellowship and The Manferdelli Family Fund in Mathematics

\subsection{Acknowledgments}
The author thanks James Hotchkiss, Tyler Lane, and Martin Olsson for many helpful discussions and suggestions. We also thank Nicolas Addington, Benjamin Antieau, and Daniel Huybrechts for comments and for each pointing me to the reference of \cite{elencwajg1983projective}.


\section{1-twisted Semi-homogeneous vector bundles on $\mu_n$-gerbes}



In this section we study some properties about 1-twisted vector bundles on $\mu_n$-gerbes that will be essential to the rest of the paper.

\begin{proposition}
\label{trivial}
    There exists a $1$-twisted line bundle $\cL$ with $\delta(\cL) = 0$ if and only if $\cX_n \cong  X \times B \mu_n$.
\end{proposition}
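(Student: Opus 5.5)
The argument splits into the two directions of the equivalence. We interpret $\delta(\cL)=0$ as saying that the ordinary line bundle $\cL^{\otimes n}$ on $X$ (it descends because it is $n$-twisted, i.e. $0$-twisted for $\mu_n$) is trivial. The notation $\frac{\det\cL^{\otimes n}}{n}$ is then a formal class that vanishes exactly when $\cL^{\otimes n}\cong\cO_X$, at least up to the torsion ambiguity which we fix by requiring an actual isomorphism $\cL^{\otimes n}\cong \cO$.

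\textbf{Easy direction.} Suppose $\cX_n\cong X\times B\mu_n$. Let $\chi$ be the tautological character of $\mu_n$, and let $\cL$ be the pullback of the line bundle on $B\mu_n$ given by $\chi$. Then $\cL$ is $1$-twisted, and $\cL^{\otimes n}$ corresponds to the character $\chi^n=1$. Hence $\cL^{\otimes n}\cong\cO_{\cX_n}$, which descends to $\cO_X$, so $\delta(\cL)=0$.

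\textbf{Converse.} Suppose $\cL$ is a $1$-twisted line bundle with a trivialization $\phi:\cL^{\otimes n}\xrightarrow{\sim}\cO$. We build a section of $\cX_n\to X$ compatible with its $\mu_n$-banding, since a $\mu_n$-gerbe with a section is isomorphic to $X\times B\mu_n$.

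The first and cleanest route is cohomological. A $1$-twisted line bundle $\cL$ makes the image of $f=[\cX_n]$ in $H^2(X,\bG_m)$ vanish. Via the Kummer sequence $H^1(X,\bG_m)\xrightarrow{\cdot n}H^1(X,\bG_m)\xrightarrow{\partial}H^2(X,\mu_n)$, the class $f$ is then $\partial(M)$ for some line bundle $M$ on $X$. Concretely, $M$ is $\cL^{\otimes n}$ descended to $X$, up to sign conventions. If $M$ is trivial, then $f=\partial(\cO)=0$, so the gerbe is trivial.

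The key point to justify carefully is the identity $\partial([\cL^{\otimes n}])=\pm f$. We would check it on Čech cocycles. Choose an étale cover $\{U_i\}$ with objects $x_i\in\cX_n(U_i)$ and trivializations $\cL|_{x_i}\cong\cO_{U_i}$. The gerbe cocycle $f_{ijk}\in\mu_n$ records the failure of the isomorphisms $x_i\cong x_j$ to compose. Through $\cL$, these isomorphisms give transition functions $g_{ij}\in\bG_m$ with $\delta g=f$ (using that $\cL$ is $1$-twisted). The functions $g_{ij}^n$ are then a cocycle for $\cL^{\otimes n}$ on $X$, which is exactly the recipe for $\partial$.

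\textbf{Geometric alternative.} One can instead take the $\mu_n$-torsor of $n$-th roots $\{s:s^{\otimes n}=\phi^{-1}(1)\}$ inside the total space of $\cL$. This gives a morphism $X\to\cX_n$ that splits the gerbe. Checking that this descends from $\cX_n$ to $X$ is the same cocycle computation in another guise.

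\textbf{Main obstacle.} The delicate step is the cocycle compatibility, including the sign and the requirement that $\phi$ be an honest isomorphism rather than merely a numerical triviality of slope. Tensoring $\cL$ by a $0$-twisted line bundle $N$ changes $M$ by $N^{n}$. So the proposition really requires $M\in n\Pic(X)$, and we would read $\delta(\cL)=0$ as that condition, with $\cL$ adjusted by a suitable $N$ to make $M$ trivial.
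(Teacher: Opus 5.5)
Your easy direction is fine. Your endgame is also fine once you have a 1-twisted $\cL'$ with $\cL'^{\otimes n}\cong\cO$, or equivalently $M\in n\operatorname{Pic}(X)$: the Kummer identity $[\cX_n]=\pm\partial(M)$ then gives triviality. That matches the paper's observation that $(\cL',\cL'^{\otimes n}\cong\cO)$ is a map $\cX_n\to B\mu_n$, hence a morphism of $\mu_n$-gerbes $\cX_n\to X\times B\mu_n$, which is automatically an isomorphism.

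\textbf{The gap is in how you treat the hypothesis.} The slope is a numerical invariant: $\delta(\cL)$ lives in $\NS(X)\otimes\mathbb{Q}$. This is how the paper uses it, e.g.\ in the equivalence with the condition $\delta(\cL)\in\NS(X)$ and when comparing slopes of semi-homogeneous bundles. So $\delta(\cL)=0$ only says that the line bundle $M$ on $X$ with $p^*M\cong\cL^{\otimes n}$ has zero class in $\NS(X)\otimes\mathbb{Q}$. Since $\NS(X)$ is torsion-free, this means $M\in\operatorname{Pic}^0(X)=X^t(k)$. It does not say $M\cong\cO_X$, and it does not visibly say $M\in n\operatorname{Pic}(X)$. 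You correctly isolate $M\in n\operatorname{Pic}(X)$ as what is needed. But you then ``read $\delta(\cL)=0$ as that condition'', which replaces the proposition's hypothesis by a stronger one instead of deducing it.

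The missing idea is exactly the key step of the paper's proof. Since $k$ is algebraically closed, $[n]\colon X^t\to X^t$ is surjective on $k$-points, so there is $N\in\operatorname{Pic}^0(X)$ with $N^{\otimes n}\cong M$. Then $\cL':=\cL\otimes p^*N^{-1}$ is still 1-twisted and satisfies $\cL'^{\otimes n}\cong\cO$. In your cohomological language, $[\cX_n]=\pm\partial(M)=\pm\partial(N^{\otimes n})=0$ by exactness of the Kummer sequence.

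This step is not a formality. Over a non-closed base field, a numerically trivial $M$ need not be an $n$-th power, and a $\mu_n$-gerbe carrying a slope-zero 1-twisted line bundle need not be trivial. With this one line inserted, your argument is complete.
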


\begin{proof}
    If $\cX_n \cong X \times B\mu_n$, then $\cL = \cO_X \otimes \chi$ is a 1-twisted line bundle with slope 0. We now show the converse.

    If there exists $\cL$ on $\cX_n$ with slope 0, we know that $\det{\cL} = p^* \cN$ where $p: \cX_n \to X$ for some $\cN \in X^t(k)$. Let $\cM$ be a line bundle on $X$ such that $\cM^{\otimes n} \cong \cN$, we see that $\cL^{\otimes n} \cong p^* \cN \cong p^* \cM^{\otimes n}$. Now let $\cL' := \cL \otimes \cM^{-1}$, then $\cL'^{\otimes n} \cong \cO_X$, which means that $\cL'$ defines a morphism $\cX_n \to X \times B\mu_n$ such that the pullback of $\cO_X \otimes \chi$ is $\cL'$. Since $\cL'$ is 1-twisted, we see that this morphism is an isomorphism of $\mu_n$-gerbes.
\end{proof}

\begin{remark}
    By \cite[Lemma 2.3.4.2]{lieblich2007moduli}, the existence of 1-twisted line bundle is equivalent to the fact that $\cX_n$ is essentially trivial (c.f. \cite[Definition 2.3.4.1]{lieblich2007moduli}.) The theorem above shows that the existence of 1-twisted slope $0$ line bundle is equivalent to the fact that $\cX_n$ is trivial. In fact, we have the following equivalences.
\end{remark}

\begin{proposition}
    The following are equivalent:
    \begin{enumerate}
        \item $\cX_n \cong X \times B \mu_n$;
        \item there exists a 1-twisted line bundle $\cL$ on $\cX_n$ such that $\delta(\cL) = 0$;
        \item there exists a 1-twisted line bundle $\cL$ on $\cX_n$ such that $\delta(\cL) \in \NS(X)$.
    \end{enumerate}
\end{proposition}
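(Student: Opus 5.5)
We prove the cycle (1)$\Rightarrow$(2)$\Rightarrow$(3)$\Rightarrow$(1). The first two steps are immediate. Proposition \ref{trivial} gives (1)$\Leftrightarrow$(2). For (2)$\Rightarrow$(3), note that $0 \in \NS(X)$. The only real content is (3)$\Rightarrow$(2). Here the plan is to modify a given $1$-twisted line bundle $\cL$ by a line bundle pulled back from $X$, so as to kill its slope. Twisting by a line bundle from $X$ keeps the bundle $1$-twisted, since it is $0$-twisted.

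Throughout, read $\delta(\cL)=\frac{\det(\cL)^{\otimes n}}{n}$ additively. The class $\cL^{\otimes n}$ is $n$-fold twisted, i.e.\ $0$-twisted, so it descends to a line bundle $\cN$ on $X$. Its class in $\NS(X)$ gives $\delta(\cL) = [\cN]/n \in \NS(X)\otimes \bQ$. Hypothesis (3) says this lies in $\NS(X)$, so $[\cN] = n\,c$ for some $c\in \NS(X)$. Choose a line bundle $\cM$ on $X$ with class $c$, and set $\cL' := \cL \otimes p^*\cM^{-1}$, where $p:\cX_n\to X$. Then $\cL'$ is again $1$-twisted, and $\cL'^{\otimes n} \cong p^*(\cN\otimes \cM^{-n})$. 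Since $\cN \otimes \cM^{-n}$ has trivial class in $\NS(X)$, it lies in $\mathrm{Pic}^0(X)=X^{\vee}(k)$. Hence $\delta(\cL') = 0$, which is (2). Proposition \ref{trivial} then gives (1). Its proof in fact already absorbs a $\mathrm{Pic}^0$ ambiguity, using $n$-divisibility of $X^\vee(k)$ over an algebraically closed field.

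The main obstacle is bookkeeping: making precise what ``$\delta(\cL)\in \NS(X)$'' and ``$\delta(\cL)=0$'' mean, given that the slope is defined via $\det$ and division by $n$. I will interpret $\delta$ as valued in $\NS(X)\otimes\bQ$. Under that reading, $\delta=0$ means $\cL^{\otimes n}$ descends to an element of $\mathrm{Pic}^0(X)$, which is exactly the hypothesis used in the proof of Proposition \ref{trivial}. I also need to check that $\NS(X)$ is torsion-free for an abelian variety. This ensures that $[\cN]\in n\NS(X)$ as an element of $\NS(X)$, not merely after tensoring with $\bQ$, and that the choice of $c$ is unambiguous.

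Once this interpretation is fixed, each remaining step is a one-line verification, and the equivalence follows.
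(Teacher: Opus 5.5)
Your proof is correct and matches the paper's argument. The paper also cites Proposition \ref{trivial} for (1)$\Leftrightarrow$(2) and gets (3)$\Rightarrow$(2) by tensoring with a $0$-twisted line bundle; here that bundle is $p^*\cM^{-1}$. Your extra bookkeeping makes explicit what the paper leaves implicit: reading $\delta$ in $\NS(X)\otimes\mathbb{Q}$, and using that $\NS(X)$ is torsion-free so that $[\cN]=nc$ holds integrally.
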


\begin{proof}
    The equivalence of (a) and (b) is Proposition \ref{trivial}, and the equivalence of (b) and (c) can be obtained by tensoring with 0-twisted line bundles.
\end{proof}

Due to Theorem \ref{A}, the notion of semi-homogeneous bundle plays an important role in our study. Here we recall the definition of semi-homogeneous vector bundles.

\begin{definition}
    Let $\iota: \cX_n \to \cX$ be the pushout map of $\cX_n$ along $\mu_n \to \bG_m$. Any 1-twisted vector bundle $\cE_n$ on $\cX_n$ satisfies $\cE_n \cong \iota^* \cE$ for some $1$-twisted vector bundle $\cE$ on $\cX$. We say $\cE_n$ is semi-homogeneous if $\cE$ is semi-homogeneous on $\cX$.
\end{definition}

By the definition of semi-homogeneous vector bundle above and in \cite[Definition 1.0.1]{li2025point}, we have the following commutative diagram

\[
\begin{tikzcd}[column sep=large, row sep=large]
\operatorname{Coh}(\mathcal{X})^{(1)}
  \arrow[r, "\iota^*"]
&
\operatorname{Coh}(\mathcal{X}_n)^{(1)}
\\
\left\{
\begin{array}{c}
\text{1-twisted} \\
\text{semi-homogeneous} \\
\text{vector bundles} \\
\text{on } \mathcal{X}
\end{array}
\right\}
  \arrow[u, hook]
  \arrow[r, "\iota^*"']
&
\left\{
\begin{array}{c}
\text{1-twisted} \\
\text{semi-homogeneous} \\
\text{vector bundles} \\
\text{on } \mathcal{X}_n
\end{array}
\right\}
  \arrow[u, hook]
\end{tikzcd}
\]
where the two horizontal arrows are equivalences.

The concept of (untwisted) semi-homogeneous vector bundle is studied in \cite{mukai1978semi}, and the generalization to the twisted case is studied in \cite{li2025point}, where it is defined as a property of 1-twisted vector bundles on $\bG_m$-gerbes. By the diagram above, we see that the results about 1-twisted semi-homogeneous vector bundles still hold true. We restate some of them here.

\begin{proposition}[Proposition 3.1.3 of \cite{li2025point}]
Let $\pi: Y \to X$ be an isogeny, and also denote $\pi: \cY_n:= \cX_n \times_X Y \to \cX_n$. Let $\cF$ be a 1-twisted vector bundle on $\cY_n$ and $\cE$ a 1-twisted vector bundle on $\cX_n$.
    \begin{itemize}
        \item $\cF$ is semi-homogeneous if and only if $\pi_* \cF$ is semi-homogeneous;
        \item $\cE$ is semi-homogeneous if and only if $\pi^* \cE$ is semi-homogeneous.
    \end{itemize}
\end{proposition}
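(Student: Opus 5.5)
The plan is to reduce the statement to its $\bG_m$-gerbe counterpart, \cite[Proposition 3.1.3]{li2025point}, using the equivalence $\iota^*$ from the commutative diagram above. Let $\cY := \cX \times_X Y$ be the pullback of the $\bG_m$-gerbe. Then $\cY_n = \cX_n \times_X Y$ is the corresponding $\mu_n$-gerbe over $Y$, and its pushout along $\mu_n \to \bG_m$ is $\cY$. Writing $\iota_Y: \cY_n \to \cY$ and $\iota: \cX_n \to \cX$, we get a $2$-cartesian square
\[
\begin{tikzcd}
\cY_n \arrow[r, "\iota_Y"] \arrow[d, "\pi"'] & \cY \arrow[d, "\pi"] \\
\cX_n \arrow[r, "\iota"'] & \cX
\end{tikzcd}
\]
Here both vertical maps are base changes of the finite flat isogeny $\pi: Y \to X$. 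The square is cartesian because $\cY_n \cong \cX_n \times_X Y \cong \cX_n \times_{\cX} (\cX \times_X Y)$.

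First I would check that pullback and pushforward commute with $\iota^*$. For a 1-twisted bundle $\cE$ on $\cX$, we have $\iota_Y^* \pi^* \cE \cong \pi^* \iota^* \cE$ by functoriality of pullback. For a 1-twisted bundle $\cF'$ on $\cY$, flat base change along the finite flat morphism $\pi$ gives $\iota^* \pi_* \cF' \cong \pi_* \iota_Y^* \cF'$. Both functors preserve the 1-twisted parts, since the banding actions are compatible with pullback and with $\mu_n \to \bG_m$.

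Now take $\cF$ on $\cY_n$. By the equivalence (the horizontal arrows in the diagram, applied over $Y$), we can write $\cF \cong \iota_Y^* \cF'$ for a 1-twisted bundle $\cF'$ on $\cY$. By definition, $\cF$ is semi-homogeneous iff $\cF'$ is. By \cite[Proposition 3.1.3]{li2025point}, this holds iff $\pi_* \cF'$ is semi-homogeneous on $\cX$. Since $\pi_* \cF \cong \iota^* \pi_* \cF'$, this in turn holds iff $\pi_* \cF$ is semi-homogeneous. The second bullet follows the same way: write $\cE \cong \iota^* \cE'$, so that $\pi^* \cE \cong \iota_Y^* \pi^* \cE'$, and apply the $\bG_m$ statement.

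The main point to verify is that $\iota^*$ is really an equivalence on 1-twisted categories, so that the lifts $\cF'$ and $\cE'$ exist and semi-homogeneity is well defined independently of the choice of lift. This holds because $\iota_*$ of the 1-twisted part gives a quasi-inverse: $\cX_n \to \cX$ is a gerbe for $\bG_m/\mu_n \cong \bG_m$, and 1-twisted sheaves on $\cX_n$ descend accordingly. I take this as given by the diagram stated above. The only remaining care is the base change isomorphism for $\pi_*$, which is standard because $\pi$ is finite flat.
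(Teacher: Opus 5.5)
Your proposal is correct and follows the paper's route: the paper simply transports \cite[Proposition 3.1.3]{li2025point} across the equivalence $\iota^*$ in the diagram, and you make explicit the compatibilities $\iota_Y^*\pi^*\cong\pi^*\iota^*$ and $\iota^*\pi_*\cong\pi_*\iota_Y^*$ that it leaves implicit (the latter holds because $\pi$ is finite, hence affine, or because $\iota$ is flat; flatness of $\pi$ is not what flat base change needs). One aside is misstated, though it is not load-bearing since you, like the paper, take the equivalence as given: $\iota\colon\cX_n\to\cX$ is not a gerbe but a representable torsor under $\bG_m/\mu_n\cong\bG_m$ (the fiber of $B\mu_n\to B\bG_m$ is $\bG_m/\mu_n$), and the quasi-inverse on $1$-twisted sheaves is the weight-one part of $\iota_*$.
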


\begin{proposition}[Theorem 1.1.1 of \cite{li2025point}]
    If $\cE$ is a 1-twisted simple semi-homogeneous vector bundle, then there exists an isogeny $\pi: Y \to X$ and a 1-twisted line bundle $\cL$ on $\cY_n$ such that $\cE \cong \pi_* \cL$.
\end{proposition}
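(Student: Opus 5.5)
The plan is to transport the statement from the $\mu_n$-gerbe $\cX_n$ to the associated $\bG_m$-gerbe $\cX$, apply the cited result there, and transport the output back. Let $\iota\colon \cX_n \to \cX$ be the pushout along $\mu_n \to \bG_m$. Since $\iota^*$ gives an equivalence between 1-twisted vector bundles on $\cX$ and on $\cX_n$, we may write $\cE \cong \iota^*\cE'$ for a 1-twisted vector bundle $\cE'$ on $\cX$.

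First I transfer the hypotheses to $\cE'$. Semi-homogeneity of $\cE$ is by definition semi-homogeneity of $\cE'$. Simplicity transfers as well: the equivalence is fully faithful on 1-twisted bundles, so $\End(\cE) = \End(\cE') = k$. Concretely, this is because sheaf-Hom between two 1-twisted sheaves is 0-twisted and hence descends to $X$, and the two descended sheaves agree.

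Next I invoke Theorem 1.1.1 of \cite{li2025point} on $\cX$. This gives an isogeny $\pi\colon Y \to X$ and a 1-twisted line bundle $\cL'$ on $\cY := \cX \times_X Y$ with $\cE' \cong \pi_*\cL'$.

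Then I pull this back to $\cX_n$. Consider the cartesian square
\[
\begin{array}{ccc}
\cY_n & \xrightarrow{\ \pi\ } & \cX_n \\
\downarrow{\scriptstyle\iota_Y} & & \downarrow{\scriptstyle\iota} \\
\cY & \xrightarrow{\ \pi\ } & \cX
\end{array}
\]
where $\cY_n = \cX_n \times_X Y = \cX_n \times_{\cX} \cY$. Set $\cL := \iota_Y^*\cL'$. This is a 1-twisted line bundle on $\cY_n$, since pullback along a morphism of gerbes compatible with $\mu_n \to \bG_m$ preserves the twisting weight 1. Because $\pi$ is finite (hence affine), flat base change gives
\[
\cE \cong \iota^*\pi_*\cL' \cong \pi_*\iota_Y^*\cL' = \pi_*\cL .
\]

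The main obstacle is checking that the square is genuinely cartesian and that base change applies to stacks. Cartesianness holds because both $\cY_n$ and $\cX_n \times_{\cX} \cY$ are $\cX_n \times_X Y$. Base change applies because $\pi$ is representable, finite and flat, so one can verify it smooth-locally on $\cX$, where it reduces to ordinary flat base change for affine morphisms.

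If one prefers to avoid this, the alternative is to rerun Mukai's argument directly on $\cX_n$, using the statement in \cite{li2025point} for any gerbe with banded $\mu_n$. I would take the transport route, since the preceding diagram of equivalences is set up exactly for that purpose.
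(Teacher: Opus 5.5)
Your proposal is correct and follows the same route as the paper. The paper justifies this restatement only by transporting \cite[Theorem 1.1.1]{li2025point} from $\cX$ to $\cX_n$ through the equivalence $\iota^*$ on 1-twisted (semi-homogeneous) bundles; you additionally spell out what it leaves implicit, namely that simplicity is preserved because $\iota^*$ is fully faithful, and that $\iota^*\pi_* \cong \pi_*\iota_Y^*$ on the cartesian square (which holds for any base change since $\pi$ is affine, so flatness is not needed).
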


\begin{proposition}[Theorem 1.1.2 of \cite{li2025point}]
    A 1-twisted vector bundle $\cE$ on $\cX_n$ is semi-homogeneous if and only if $\cE \cong \bigoplus_{\cE_i}U_{\cX_n, \cE_i}$ where $U_{\cX_n, \cE_i}$ has a filtration whose successive quotients are isomorphic to a fixed simple semi-homogeneous vector bundles $\cE_i$ ($\cE_i$'s are non-isomorphic) such that all $\delta(\cE_i)$ are equal in $NS(X) \otimes \mathbb{Q}$, and only finitely many $U_{\cX_n, \cE_i} \neq 0$.
\end{proposition}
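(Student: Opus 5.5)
The statement is Theorem 1.1.2 of \cite{li2025point}, stated for $\bG_m$-gerbes; the plan is to transport it to $\cX_n$ along the equivalence $\iota^*$ in the commutative diagram above, rather than to reprove it. Let $\cE$ be a 1-twisted vector bundle on $\cX_n$ and write $\cE \cong \iota^*\cE'$ with $\cE'$ a 1-twisted bundle on $\cX$. By definition $\cE$ is semi-homogeneous if and only if $\cE'$ is. Theorem 1.1.2 of \cite{li2025point} then says this holds if and only if $\cE' \cong \bigoplus U_{\cX,\cE'_i}$, where each $U_{\cX,\cE'_i}$ is filtered with successive quotients $\cE'_i$, the $\cE'_i$ are pairwise non-isomorphic simple semi-homogeneous bundles of equal slope, and only finitely many summands are nonzero. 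Applying the exact functor $\iota^*$ carries direct sums to direct sums and filtrations to filtrations with quotients $\iota^*\cE'_i$. Conversely, a decomposition on $\cX_n$ descends, since $\iota^*$ is an equivalence on 1-twisted coherent sheaves and therefore preserves and reflects every extension and sum.

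Three compatibilities must be checked for this transfer to work.

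\emph{Simplicity.} $\iota^*$ is fully faithful on 1-twisted sheaves, so $\operatorname{End}(\iota^*\cE'_i) = \operatorname{End}(\cE'_i) = k$. The same full faithfulness shows the $\iota^*\cE'_i$ remain pairwise non-isomorphic.

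\emph{Semi-homogeneity of the $\iota^*\cE'_i$.} This is immediate from the definition.

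\emph{Slopes.} This is the main obstacle. In \cite{li2025point} the slope on $\cX$ is defined through a different normalization, presumably via $\det$ of twisted sheaves on the $\bG_m$-gerbe or via pullback to a $\mu_m$-lift. We must show that equality of those slopes in $\NS(X)\otimes\bQ$ is equivalent to equality of $\delta(\iota^*\cE'_i) = \frac{\det(\iota^*\cE'_i)^{\otimes n}}{n\,\rk}$.

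For the slope comparison, first note that $\det(\cE)^{\otimes n}$ is 0-twisted on $\cX_n$, hence descends to $X$, so $\delta(\cE)$ is a well-defined class in $\NS(X)\otimes\bQ$. Second, the slope in \cite{li2025point} agrees with that of \cite{krashen2008index}, as the remark above records, and it is computed on any $\mu_m$-lift in the same way. Third, if the slope on $\cX$ is defined using a $\mu_m$-gerbe $\cX_m$, pass to a common multiple $N$ of $m$ and $n$. The pullbacks to $\cX_N$ of the two $\mu$-gerbe structures differ by a 0-twisted line bundle correction, which acts as a common additive shift on all slopes of rank-normalized bundles. Therefore equality of slopes is independent of the lift chosen. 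Finally, since $\det$ is multiplicative in filtrations, the slope of the whole bundle equals the common slope of its graded pieces. With these three points checked, the two characterizations coincide and the proposition follows.
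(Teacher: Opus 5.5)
Your proposal is correct and takes the same route as the paper: the paper gives no separate argument and just transports Theorem 1.1.2 of \cite{li2025point} along the equivalence $\iota^*$ in the commutative diagram of Section 2. Your slope check fills in a compatibility the paper leaves implicit, namely that changing the $\mu$-lift shifts every slope by the same class in $\NS(X)\otimes\mathbb{Q}$, so equality of slopes is intrinsic to $\cX$. One small misattribution: the remark about \cite{krashen2008index} concerns the paper's $\delta$ on $\cX_n$, not the slope of \cite{li2025point}, but your lift-independence argument makes that citation unnecessary.
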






\section{Minimal Trivializing Isogenies of $\mu_n$-gerbes}

\subsection{Correspondence with Simple Semi-homogeneous Vector bundles}
In this subsection, we prove the following theorem that illustrates the correspondence between minimal trivializing isogenies of $\cX_n$ and the simple semi-homogeneous vector bundle on $\cX_n$. More precisely, the degree of minimal trivializing isogenies agrees with the rank of simple semi-homogeneous vector bundles of slope 0 on $\cX_n$. 

\begin{theorem}
\label{rank=ann}
    A 1-twisted vector bundle $\cE$ on $\cX_n$ with $\delta(\cE) = 0$ is simple semi-homogeneous if and only if $\cE \cong \pi_* \cL$ for some minimal trivializing isogeny $\pi: Y \to X$ and (1-twisted) line bundle $\cL$ on $Y \times_{X} \cX_n$ with $\delta(\cL) = 0$.
\end{theorem}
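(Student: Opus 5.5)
We prove the two directions separately. For both, the key tools are Proposition \ref{trivial}, which says a slope-zero 1-twisted line bundle exists exactly when the gerbe is trivial, and the twisted Mukai theory recalled in Section 2.

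\emph{($\Leftarrow$)} Let $\pi: Y \to X$ be a minimal trivializing isogeny and $\cL$ a 1-twisted line bundle on $\cY_n = Y\times_X \cX_n$ with $\delta(\cL)=0$. Then $\pi_*\cL$ is a 1-twisted vector bundle of rank $\deg\pi$.
\begin{enumerate}
\item Semi-homogeneity: line bundles on $\cY_n$ are semi-homogeneous, so $\pi_*\cL$ is semi-homogeneous by Proposition 3.1.3 of \cite{li2025point}.
\item Slope: $\delta(\pi_*\cL)$ pulls back to $\delta(\cL)=0$ in $\NS(Y)\otimes\mathbb Q$. Since $\pi^*$ is injective on $\NS\otimes\mathbb Q$, we get $\delta(\pi_*\cL)=0$. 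Concretely, $\pi^*\pi_*\cL\cong\bigoplus_{y\in\ker\pi} t_y^*\cL$, and each summand has slope $0$.
\item Simplicity: by adjunction, $\operatorname{End}(\pi_*\cL)=\bigoplus_{y\in\ker\pi}\operatorname{Hom}(t_y^*\cL,\cL)$. So $\pi_*\cL$ is simple iff $t_y^*\cL\not\cong\cL$ for all $0\ne y\in\ker\pi$.
\end{enumerate}
Suppose instead that $t_y^*\cL\cong\cL$ for some $y\neq 0$. Then the subgroup $K\subseteq\ker\pi$ stabilizing $\cL$ is nontrivial. The theta-group/descent argument of Mumford, applied to the gerbe, should let $\cL$ descend along $Y\to Y/K'$ for some nontrivial $K'\subseteq K$ on which the commutator pairing of $\cL$ vanishes. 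The descended line bundle is again 1-twisted of slope $0$ on $\cX_n\times_X Y/K'$, so by Proposition \ref{trivial} the gerbe is trivial on $Y/K'$. This contradicts minimality of $\pi$.

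Two points need care here. First, because $\delta(\cL)=0$, the map $\phi_{\cL}$ restricted to $K$ lands in $Y^\vee$ with trivial class, so the commutator pairing on $K$ is well defined. Second, the pairing may be nondegenerate on $K$, in which case it has no isotropic subgroup to descend along. To handle this I would use that the slope is zero, so $\cL^{\otimes n}$ is algebraically trivial. I expect this to force the theta group of $\cL$ over $K$ to be commutative, after possibly replacing $\cL$ by a slope-zero twist $\cL\otimes P$ with $P\in Y^\vee$, which does not change the pushforward's class up to translation.

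\emph{($\Rightarrow$)} Let $\cE$ be simple semi-homogeneous with $\delta(\cE)=0$.
\begin{enumerate}
\item By Theorem 1.1.1 of \cite{li2025point}, $\cE\cong\pi_*\cL$ for an isogeny $\pi:Y\to X$ and a 1-twisted line bundle $\cL$ on $\cY_n$.
\item As above, $\pi^*\cE\cong\bigoplus_{y\in\ker\pi} t_y^*\cL$, and all summands have slope $\delta(\cL)$ in $\NS(Y)\otimes\mathbb Q$. This forces $\delta(\cL)=\pi^*\delta(\cE)=0$.
\item Proposition \ref{trivial} then shows $\pi$ is trivializing.
\end{enumerate}
It remains to see that $\pi$ is minimal. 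Suppose $Y\to Y'\to X$ with $\cX_n\times_X Y'$ trivial and $Y\to Y'$ not an isomorphism. Choose a slope-zero 1-twisted line bundle $\cL'$ on $\cY'_n$ (Proposition \ref{trivial}) and let $\rho: Y\to Y'$ be the factoring map. Then $\cL\otimes\rho^*\cL'^{-1}$ is an untwisted line bundle of slope $0$, i.e. an element $P\in \operatorname{Pic}^0(Y)$. Since $\rho^*:\operatorname{Pic}^0(Y')\to\operatorname{Pic}^0(Y)$ is surjective, $P=\rho^*P'$, so $\cL\cong\rho^*(\cL'\otimes P')$. Then
\[\pi_*\cL\cong\pi'_*\bigl((\cL'\otimes P')\otimes\rho_*\cO_Y\bigr),\]
and $\rho_*\cO_Y$ splits into $\deg\rho>1$ line bundles. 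So $\cE$ decomposes, contradicting simplicity.

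The main obstacle is the simplicity part of ($\Leftarrow$): showing that a nontrivial stabilizer of $\cL$ in $\ker\pi$ yields a proper intermediate trivializing isogeny.
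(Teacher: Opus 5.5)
Your ($\Rightarrow$) direction is correct, and it is more explicit than the paper, which only asserts the equivalence. Writing $\mathcal{L}\cong\rho^*(\mathcal{L}'\otimes P')$ and splitting $\rho_*\mathcal{O}_Y$ into $\deg\rho$ line bundles cleanly shows that a non-minimal $\pi$ makes $\pi_*\mathcal{L}$ decomposable. The slope and semi-homogeneity steps of ($\Leftarrow$) are also fine.

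The gap is the step you flag yourself. You never show that a nontrivial stabilizer $K\subseteq\ker\pi$ of $\mathcal{L}$ yields a proper intermediate trivializing isogeny, and the remedy you sketch cannot work:
\begin{itemize}
\item Twisting $\mathcal{L}$ by $P\in\operatorname{Pic}^0(Y)$ does not change the stabilizer, since $t_a^*P\cong P$.
\item It does not change the commutator pairing of the theta group over $K$ either, since $e^P\equiv 1$.
\item Slope zero does not force that pairing to vanish. Take $X$ an elliptic curve, $\mathcal{X}_n$ the gerbe of $n$-th roots of $\mathcal{O}(p)$, $\pi=[n]$, and $\mathcal{L}=\mathcal{L}_{\mathrm{taut}}\otimes M^{-1}$ with $M^{\otimes n}\cong[n]^*\mathcal{O}(p)$. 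Then $\delta(\mathcal{L})=0$, the stabilizer is all of $X[n]=K(M)$, and the theta group is a Heisenberg group with nondegenerate pairing.
\end{itemize}
So your expectation that the theta group over $K$ becomes commutative is false in general.

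The missing observation is that nondegeneracy on $K$ is irrelevant. You only need one nontrivial subgroup over which the theta group splits, and every cyclic subgroup qualifies. Take $0\neq y\in K$ and $K'=\langle y\rangle$.
\begin{itemize}
\item The central extension $1\to k^*\to\mathcal{G}\to K'\to 1$ of a cyclic group is abelian. Equivalently, an alternating pairing vanishes on $\langle y\rangle$ because $e(y,y)=1$.
\item It splits because $\operatorname{Ext}^1(\mathbb{Z}/m,k^*)=k^*/(k^*)^m=0$.
\item A splitting is a $K'$-linearization, so $\mathcal{L}$ descends to a 1-twisted line bundle on $\mathcal{X}_n\times_X (Y/K')$.
\item That bundle has slope $0$, because pullback is injective on $\NS\otimes\mathbb{Q}$.
\item By Proposition \ref{trivial}, $Y/K'\to X$ is then trivializing, contradicting minimality.
\end{itemize}
This is the justification left implicit in the paper's one-line claim that $a\cdot\mathcal{L}\cong\mathcal{L}$ for some $a\neq 0$ if and only if $\mathcal{L}$ descends to some intermediate $Y'$.
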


\begin{proof}
    By \cite[proposition 3.3]{li2025point}, we know that given any trivializing isogeny $\pi: Y \to X$ for $\cX_n$, $\pi_* \cL$ is semi-homogeneous for any line bundle $\cL$ on $\cY_n := \cX_n \times_X Y$. Since $\pi^* \pi* \cL \cong \oplus_{a \in \ker{\pi}} a \cdot \cL$, we see that $\pi^*\det(\cE)= \det{\oplus_{a \in \ker{\pi}} a \cdot \cL} \cong \det(\cL^{\otimes \deg{\pi}})$, so $\delta(\cE) = 0$ if and only if $\deg{\cL} = 0$. So we need to show that $\pi_* \cL$ is simple if and only if $\pi$ is minimal.

    \[\Hom_{\cX_n}(\pi_* \cL, \pi_* \cL) \cong \Hom_{\cY_n}(\pi^* \pi_* \cL, \cL) \cong \Hom_{\cY_n}(\oplus_{a \in \ker{\pi}}a \cdot \cL, \cL) \]
    where $a \cdot \cL$ denotes the descent action of a on $\cL$.

    We see that $\pi_* \cL$ is simple if and only if $a \cdot \cL \not \cong \cL$ for any $a \neq 0$ if and only if $\cL$ does not descend to $\cX_n \times_X Y'$ for any $Y \to Y' \to X$, by Proposition \ref{trivial}, this is equivalent to the fact that $\pi$ is minimal since $\cL$ has degree 0. 
\end{proof}

\begin{remark}
    If we remove the assumption of $\delta(\cE) = 0$, by \cite[Theorem 1.1.1]{li2025point}, it is still true that $\cE \cong \pi'_* \cL'$ where $\pi': Y' \to X$ is an isogeny and $\cL'$ is a 1-twisted line bundle on $\cY' := \cX \times_X Y'$. However, $\pi'$ is not necessarily a minimal trivializing isogeny, as in the following proposition (such $\pi'_* \cL'$ can never have $\delta \in \NS(X)$). Furthermore, $\pi'$ does not even need to be a trivializing isogeny, that is, $\cY'$ does not need to be the trivial $\mu_n$-gerbe over $Y$, it just needs to be an essentially trivial $\mu_n$-gerbe (i.e. the pushout $\bG_m$-gerbe is trivial) so that a 1-twisted line bundle exists on $\cY'$.
\end{remark}

\begin{proposition}
\label{Prop: example}
    Assume $\dim{X} = 1$, and let $r$ be the rank of 1-twisted simple semi-homogeneous vector bundles on $\cX_n$ whose slope is $0$. For any integer $m$, there exists a 1-twisted simple semi-homogeneous vector bundle on $\cX_n$ with rank $mr$.
\end{proposition}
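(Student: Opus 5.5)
Throughout, $X=E$ is an elliptic curve. The plan is to build the required bundle as a pushforward from an isogeny, using Theorem \ref{rank=ann} to supply a rank-$r$ bundle. Since every isogeny of degree $m$ relative to a given one is available, the rank $mr$ should come out directly.

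First, Theorem \ref{rank=ann} gives a minimal trivializing isogeny $\pi\colon Y\to X$ of degree $r$ and a 1-twisted line bundle $\cL$ on $\cY_n$ with $\delta(\cL)=0$ and $\pi_*\cL$ simple. The case $m\le 0$ is degenerate; for $m=1$ we simply take $\pi_*\cL$. For $m\ge 2$, pick an isogeny $\phi\colon Z\to Y$ of degree $m$; for instance multiplication by an integer, or any cyclic isogeny of degree $m$, which exists on elliptic curves. Set $\psi=\pi\circ\phi$. Then $\cZ_n:=\cX_n\times_X Z$ is trivial, so by Proposition \ref{trivial} it carries 1-twisted line bundles $\phi^*\cL\otimes\cM$ for any ordinary line bundle $\cM$ on $Z$. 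Choose $\cM$ of degree $1$ (any degree coprime to $m$ should do), and put $\cE=\psi_*(\phi^*\cL\otimes \cM)$. This bundle has rank $mr$, and it is semi-homogeneous by \cite[Proposition 3.1.3]{li2025point}, as restated in Section 2 (pushforward of a line bundle along an isogeny).

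The main step is simplicity. As in the proof of Theorem \ref{rank=ann},
\[\operatorname{End}(\cE)\cong \bigoplus_{a\in\ker\psi}\Hom\bigl(a\cdot(\phi^*\cL\otimes\cM),\ \phi^*\cL\otimes\cM\bigr).\]
So we need $a\cdot(\phi^*\cL\otimes\cM)\not\cong \phi^*\cL\otimes\cM$ for every $0\ne a\in\ker\psi$. We split into two cases.

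\begin{enumerate}
\item If $a\in\ker\phi$, then $\phi^*\cL$ is invariant under translation by $a$, and the question reduces to whether $t_a^*\cM\cong\cM$. Since $\cM$ has degree $1$ on an elliptic curve, $\phi_{\cM}\colon Z\to Z^\vee$ is an isomorphism, so $t_a^*\cM\not\cong\cM$ for $a\ne0$.
\item If $a\notin\ker\phi$, then its image $\bar a\in\ker\pi$ is nonzero, and we use that $\bar a\cdot\cL\not\cong\cL$. However, after twisting by $\cM$ the failure of invariance could in principle be compensated. I would handle this with the Mumford group: the subgroup of $\ker\psi$ stabilizing the line bundle consists of those $a$ whose translate differs by the trivial character. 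Comparing degrees modulo the descent data, I expect to show that any such $a$ must lie in $\ker\phi$, and that no nonzero element of $\ker\phi$ qualifies.
\end{enumerate}

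This case $a\notin\ker\phi$ is the main obstacle. If the direct argument is messy, a cleaner route is to use \cite[Theorem 1.1.2]{li2025point} and the numerical characterization: a semi-homogeneous $\cE$ is simple iff $\rk\cE$ is the minimal rank for its slope. On a curve the slope is $\delta(\cE)=\deg/\rk=d/(mr)$ in suitable normalization with $\gcd(d,m)=1$ by our choice of $\cM$. The minimal-rank simple bundle of that slope then has rank a multiple of $m$, and is $r$ times the reduced denominator. Concretely, tensoring by a rank-$r$ slope-$0$ simple $\cE_0$ identifies slope-$d/m$ simple twisted bundles with untwisted Atiyah bundles of rank $m$ and degree $d$, which are simple since $\gcd(m,d)=1$. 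I would present this comparison as the decisive argument.
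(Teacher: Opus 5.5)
\textbf{There is a genuine gap: your case (2) is false in general, and the fallback slope argument does not repair it.}

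Your construction is the paper's. $\phi^*\cL$ is a slope-$0$ 1-twisted line bundle on $\cX_n\times_X Z$ (the paper's trivializing $\cN$), so $\phi^*\cL\otimes\cM$ is the paper's line bundle with $\cL\otimes\chi^{-1}$ of class $1$, pushed forward along $\psi=\pi\circ\phi$. The paper settles simplicity with the one-line claim that such a line bundle ``never descends through any nontrivial isogeny''. Your case (1) is fine, but case (2) is exactly where that claim needs proof.

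Since $X$ is a curve, $\Br(X)=0$. So $\cX_n$ is the gerbe of $n$-th roots of a line bundle $\mathcal{A}$ of some degree $c$, with a tautological 1-twisted $\mathcal{T}$ satisfying $\mathcal{T}^{\otimes n}\cong\mathcal{A}$. The pullback $\psi^*\mathcal{T}$ descends to $\cX_n$, so it is $\ker\psi$-invariant. Every 1-twisted line bundle on $\cX_n\times_X Z$ is $\psi^*\mathcal{T}\otimes\cO_Z(B)$, and $a\cdot(\psi^*\mathcal{T}\otimes\cO_Z(B))\cong\psi^*\mathcal{T}\otimes t_a^*\cO_Z(B)$. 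Hence the stabilizer is $\ker\psi\cap Z[\deg B]$. The pushforward $\psi_*(\psi^*\mathcal{T}\otimes\cO_Z(B))\cong\mathcal{T}\otimes\psi_*\cO_Z(B)$ is therefore simple iff $\gcd(mr,\deg B)=1$.

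\textbf{Counterexample.} Take $n=2$ and $c=1$.
\begin{itemize}
\item Then $r=2$ and $\cL=\pi^*\mathcal{T}\otimes\cO_Y(D_0)$ with $\deg D_0=-1$.
\item For $m=3$, your line bundle is $\psi^*\mathcal{T}\otimes\cO_Z(B)$ with $\deg B=-3+1=-2$.
\item $\ker\psi$ has order $6$, and its element of order $2$ lies in $Z[2]$ but not in $\ker\phi$. It fixes your line bundle.
\item Indeed $\cE\cong\mathcal{T}\otimes\psi_*\cO_Z(B)$ with $\psi_*\cO_Z(B)$ semistable of rank $6$ and degree $-2$, so $\dim\operatorname{End}(\cE)=2$.
\end{itemize}
The same failure occurs for every odd $m$, and also for $m=1$ in the paper's version, where $\deg B=0$. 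This is the compensation you worried about. For $a\notin\ker\phi$, $a\cdot\phi^*\cL\cong\phi^*\cL\otimes\phi^*\bigl(t_{\bar a}^*\cO_Y(D_0)\otimes\cO_Y(-D_0)\bigr)$, a nontrivial degree-$0$ twist, and $t_a^*\cM\otimes\cM^{-1}$ can cancel it.

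\textbf{The fallback argument.}
\begin{itemize}
\item Your $\cE$ has $\delta(\cE)=\deg\cM/(mr)=1/(mr)$, not $d/m$, so it is not of the form $\cE_0\otimes W$.
\item Simple 1-twisted bundles of slope $1/(mr)$ have rank $mr/\gcd(mr,1-mc')$, where $c'=-\deg D_0$. In the example this is $3$, not $6$. Your count ignores the class of the gerbe.
\item Tensoring by $\cE_0$ does not preserve simplicity. In the same example with $m=2$, $\cE_0=\mathcal{T}\otimes V_0$ with $\rk V_0=2$ and $\deg V_0=-1$. For $W$ of rank $2$ and degree $1$, $V_0\otimes W$ has rank $4$ and degree $0$, so $\cE_0\otimes W$ is not simple.
\end{itemize}

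\textbf{The missing idea.} Measure degrees against a $\ker\psi$-\emph{invariant} 1-twisted line bundle, not the slope-$0$ one. Take $\cL'=\psi^*\mathcal{T}\otimes\cO_Z(B)$ with $\deg B=1$; in your notation this means $\deg\cM=1+mc'$. Then the stabilizer $\ker\psi\cap Z[1]$ is trivial, so $\psi_*\cL'$ is simple. It is semi-homogeneous by the Section 2 result on pushforwards, and it has rank $mr$.
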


\begin{proof}
    Take any minimal trivializing isogeny $\pi: Y \to X$ of $\cX_n$, let $f: Y' \to Y$ be a degree $m$ isogeny. By Proposition \ref{trivial}$, \cX_n \times_X Y'$ is trivial so there exists a 1-twisted line bundle $\cN$ with $\cN^{\otimes n} \in X^t(k)$ and this $\cN$ defines an isomorphism $\cX_n \times_X Y' \cong X \times B\mu_n$. Take $\cL$ line bundle on $\cX_n \times_X Y' \cong X \times B\mu_n$ such that $\cL \otimes \chi^{-1}$ is a representative of $1 \in NS(X)$. We know that such a line bundle never descend through any nontrivial isogeny for elliptic curves, which, for similar reasons as in the proof of Theorem \ref{rank=ann}, implies that $(f \circ \pi)_* \cL$ is 1-twisted simple semi-homogeneous on $\cX_n$ as desired.
\end{proof}

\subsection{Relation with the Value $\Ann(f)$}

In this subsection, we compute the value $\Ann(f)$ by viewing $f$ as an alternating pairing in $H^2(X, \mu_n) \cong \Hom(\wedge X[n], \mu_n)$, and we show that the degree of minimal trivializing isogenies of $\cX_n$ is $\Ann(f)$.

\begin{definition}
    For any $f \in H^2(X, \mu_n)$, $f$ corresponds to a skew-symmetric function $M: X[n] \to X^{\vee}[n]$, we denote $h(f)$ to be the size of the square root of the image of $M$.
\end{definition}

\begin{theorem}
\label{Ann{f}=h(f)}
    Given any $\pi$ a minimal trivializing isogeny of $\cX_n$ which corresponds to $f \in H^2(X, \mu_n)$, we have $\deg{\pi} = \Ann(f) = h(f)$.   
\end{theorem}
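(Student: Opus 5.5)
The plan is to translate everything into the language of alternating pairings on $X[n]$ and their behaviour under isogenies. Identify $H^2(X,\mu_n)\cong \Hom(\wedge^2 X[n],\mu_n)$ (via $H^1(X,\mu_n)\cong X^\vee[n]$, or the Weil pairing, which turns $f$ into the skew map $M: X[n]\to X^\vee[n]$). Any isogeny $\pi: Y\to X$ factors through multiplication by some power of $n$ up to prime-to-$n$ parts, and prime-to-$n$ isogenies induce isomorphisms on $H^2(-,\mu_n)$. We may therefore reduce to isogenies whose kernel is an $n$-power group. It is convenient to use the dual description: $\pi$ is determined by the finite subgroup $K=\ker(\pi^\vee)\subset X^\vee$, or equivalently by $\pi$ appearing as a quotient $X' \to X$ of the covering.

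\textbf{Step 1 (lower bound).} I will show that if $\pi^*f=0$ then $\deg\pi\ge h(f)$. The pullback $\pi^*$ on $H^1(-,\mu_n)=(-)^\vee[n]$ is $\pi^\vee$, and $\pi^*f$ corresponds to the skew form $M$ restricted and composed along $Y[n]\to X[n]$. So $\pi^*f=0$ means that $\pi^\vee\circ M\circ\pi|_{Y[n]}=0$. Hence $\pi^\vee$ kills $M(\pi(Y[n]))$. Passing to a factorization through $[n]$, i.e.\ $Y\to X\xrightarrow{}$ with $\pi\circ\rho=[n]$, this says the kernel of $\pi^\vee$ must contain a subgroup isotropic-complementary to the image of $M$. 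Since the image of a skew form has square order $h(f)^2$ and an alternating pairing on $\operatorname{im}M$ has maximal isotropic subgroups of order $h(f)$, this gives $|\ker\pi|\ge h(f)$. I expect this to be where the bookkeeping is most delicate, in particular checking the precise formula for $\pi^*f$ in terms of $\ker\pi$ (killing $f$ should mean $\ker\pi$, viewed inside $X[n]$ after dualizing, contains a Lagrangian for the nondegenerate form induced on $X[n]/\ker M$).

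\textbf{Step 2 (construction).} I will choose a Lagrangian $L$ for the nondegenerate alternating form on $X[n]/\ker M\cong \operatorname{im}M$. I will then build an isogeny of degree $|L|=h(f)$ killing $f$: the isogeny $Y\to X$ whose dual has kernel corresponding to $L$. A direct check with the formula from Step 1 shows $\pi^*f=0$. This proves $\Ann(f)=h(f)$.

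\textbf{Step 3 (all minimal ones have this degree).} Let $\pi$ be any minimal trivializing isogeny. By Theorem~\ref{rank=ann}, $\pi_*\cL$ is a simple semi-homogeneous bundle of slope $0$ of rank $\deg\pi$. By \cite[Theorem 6.1.4]{li2025point}, the rank of simple semi-homogeneous bundles of fixed slope is a numerical invariant. Applying Theorem~\ref{rank=ann} to the isogeny of Step 2, which can be taken minimal by passing to a minimal factor (whose degree is still $\ge h(f)$ by Step 1, hence equal), we get $\deg\pi=h(f)$.

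One point still needs care: the invariance statement compares simple semi-homogeneous bundles with the same $\delta$. Both have $\delta=0$, but this should be double-checked to be the hypothesis used in \cite{li2025point}. As a fallback, I would compare directly, using that $\pi^*$ of the second bundle splits into line bundles.
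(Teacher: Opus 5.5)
Your overall plan is the paper's. You get $\operatorname{Ann}(f)=h(f)$ by counting isotropic subgroups of $f$ on $X[n]$, and then use Theorem~\ref{rank=ann} plus the rank invariance from \cite{li2025point} to show that every minimal trivializing isogeny has this degree.

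Steps 2 and 3 are fine. Step 2 is the paper's construction read dually: for $G\subset X[n]$ maximal isotropic, set $Y=X/G$ with $[n]_X=\pi\circ q$. Then $\ker\pi^\vee=G^{\perp}=M(G)$ (perp under the Weil pairing), which is a Lagrangian in $\operatorname{im}M$. Step 3 is exactly the paper's argument.

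The gap is in Step 1, which as written does not prove the lower bound.

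\begin{itemize}
\item Your reduction only gives a kernel whose order involves primes dividing $n$. That yields a factorization through some $[n^k]$, not through $[n]$; the latter requires $\ker\pi\subset Y[n]$, which you never arrange.
\item Even when $\ker\pi\subset Y[n]$, the inclusion $M(\pi(Y[n]))\subset\ker\pi^\vee$ alone only gives $|\ker\pi|\ge|M(\pi(Y[n]))|$. This can be tiny: if $\pi(Y[n])=\ker M$ it is $1$.
\item What is needed is an \emph{upper} bound on $|\pi(Y[n])|$, coming from isotropy, converted into a \emph{lower} bound on $\deg\pi$. The phrase ``isotropic-complementary'' does not supply this.
\item Your parenthetical guess is also misdirected. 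The condition is that $S:=\pi(Y[n])$ is $f$-isotropic, not that $\ker\pi$ contains a Lagrangian.
\end{itemize}

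The missing count is one line and needs no reduction to $n$-power kernels. Your formula $\pi^\vee\circ M\circ\pi|_{Y[n]}=0$ says exactly that $S$ is isotropic. Hence $|S|\le |X[n]|/h(f)$ by the maximal-isotropic lemma, and
\[
\deg\pi=|\ker\pi|\;\ge\;|\ker(\pi|_{Y[n]})|=\frac{|Y[n]|}{|S|}=\frac{|X[n]|}{|S|}\;\ge\; h(f).
\]
In your dual language, the same count reads $\ker\pi^\vee\cap X^\vee[n]=S^{\perp}$, which has order $|X[n]|/|S|$. This is how the paper proves $\operatorname{Ann}(f)\ge\sqrt{|\operatorname{im}M|}$ in Lemma~\ref{ann(f)}.

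With Step 1 repaired this way, your Step 3 goes through: passing to a minimal factor of the Step 2 isogeny keeps the degree equal to $h(f)$.
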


\begin{proof}
    By definition of $\Ann(f)$, we see that there is a minimal trivializing isogeny of $\cX_n$ of degree $\Ann(f)$. By Lemma \ref{ann(f)}, we see that for any $f \in H^2(X, \mu_n)$, we have $\Ann(f) = h(f)$. 
    
    So it suffices to show that $\deg{\pi} = \Ann(f)$ for any minimal trivializing isogeny $\pi$ of $\cX_n$. Given any such $\pi$, by Proposition \ref{rank=ann}, $\deg{\pi}$ equals to the rank of some 1-twisted simple semi-homogeneous vector bundle $\cE$ on $\cX_n$ with $\delta(\cE) = 0$. By \cite[Corollary 4.2.5 or Proposition 6.1.4]{li2025point}, all such $\cE$ have the same rank. Therefore $\deg{\pi} = \Ann(f) = h(f)$.
\end{proof}

\begin{remark}
    In particular, this theorem shows that all minimal trivializing isogenies of $\cX_n$ have the same degree.
\end{remark}

We now prove the essential lemmas we used to prove the above theorem.

Let 
\[
f : X[n] \times X[n] \to \mu_n
\]
be an alternating bilinear pairing. Let
\[
R := \{ x \in X[n] \mid f(x,y) = 1 \text{ for all } y \in X[n] \}.
\]
Then \( f \) descends to
\[
\overline{X} := \frac{X[n]}{R},
\]
i.e.
\[
\overline{f} : \overline{X} \times \overline{X} \to \mu_n, 
\quad \overline{f}(\overline{x}, \overline{y}) = f(x,y),
\]
and \( \overline{f} \) is nondegenerate.

In particular, isotropic subgroups descend to isotropic subgroups.

\medskip

\begin{lemma}
    Maximal isotropic subgroups of \( f \) have size
\[
\left| \frac{X[n]}{\operatorname{im} M} \right|.
\]
\end{lemma}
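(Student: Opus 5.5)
Throughout I read $|\operatorname{im} M|$ with the normalization the paper fixes just above, in the definition of $h(f)$: the size of the image of $M$ is measured by its square root, $h(f)=|\operatorname{im} M|^{1/2}$. With that convention, $|X[n]/\operatorname{im} M|$ denotes $|X[n]|/h(f)$. A literal cardinality count would give $|\ker M|$ instead, which is already wrong when $f$ is nondegenerate: then $|\ker M|=1$, while a maximal isotropic subgroup has size $n^g$. So the plan is to show that every maximal isotropic subgroup $I\subseteq X[n]$ satisfies $|I|=|X[n]|/|\operatorname{im} M|^{1/2}$.

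\emph{Step 1: maximal isotropic subgroups contain $R$.} First, $R=\ker M$, since $M(x)=f(x,-)$. Let $I$ be isotropic. Because $R$ pairs trivially with everything, $I+R$ is again isotropic. So a maximal $I$ contains $R$. It follows that $I\mapsto I/R$ is a bijection between maximal isotropic subgroups of $f$ and maximal isotropic subgroups of the nondegenerate pairing $\overline f$ on $\overline X=X[n]/R$, using the descent noted before the lemma. Moreover $|\overline X|=|X[n]|/|\ker M|=|\operatorname{im} M|$.

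\emph{Step 2: nondegenerate case.} I claim every maximal isotropic $\bar I\subseteq\overline X$ satisfies $|\bar I|^2=|\overline X|$. Nondegeneracy means $\overline X\to\Hom(\overline X,\mu_n)$ is an isomorphism, so $|\bar I^\perp|=|\overline X|/|\bar I|$. Isotropy gives $\bar I\subseteq \bar I^\perp$, hence $|\bar I|^2\le|\overline X|$.

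For equality I would argue by contradiction. Suppose $\bar I\subsetneq \bar I^\perp$. Then $\overline f$ descends to a nondegenerate alternating pairing on $\bar I^\perp/\bar I$, which is nonzero because that group is nontrivial. Any element $\bar y$ of that group is isotropic for itself, since $\overline f(\bar y,\bar y)=1$ by alternation. Its preimage $y$ therefore gives an isotropic subgroup $\bar I+\langle y\rangle$ strictly larger than $\bar I$, contradicting maximality. Hence $\bar I=\bar I^\perp$, and so $|\bar I|=|\overline X|^{1/2}$.

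\emph{Step 3: assemble.} Combining the two steps,
\[
|I|=|R|\cdot|\overline X|^{1/2}=|\ker M|\cdot|\operatorname{im} M|^{1/2}=\frac{|X[n]|}{|\operatorname{im} M|}\cdot|\operatorname{im} M|^{1/2}=\frac{|X[n]|}{|\operatorname{im} M|^{1/2}}.
\]
This is $|X[n]|/h(f)$, i.e.\ $|X[n]/\operatorname{im} M|$ in the paper's normalization. It is also the form that feeds Lemma \ref{ann(f)}: the quotient of $X[n]$ by a maximal isotropic subgroup has order $h(f)$, which is the degree of the resulting trivializing isogeny.

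The main obstacle is Step 2, specifically making the perp-counting rigorous over $\mathbb Z/n$ rather than over a field. The key point is $|\bar I^\perp|=|\overline X|/|\bar I|$, which requires exactness of $\Hom(-,\mu_n)$ on $n$-torsion groups. This holds because $\mu_n$ is an injective $\mathbb Z/n$-module, and I would invoke exactly that.
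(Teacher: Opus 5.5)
Your proof is correct and follows the paper's argument: you pass to the nondegenerate quotient $X[n]/R$ with $R=\ker M$, show a maximal isotropic subgroup there has order $\sqrt{|\operatorname{im} M|}$, and multiply back by $|R|$ to get $|X[n]|/\sqrt{|\operatorname{im} M|}$. That is exactly the conclusion the paper's proof reaches (and uses in Lemma \ref{ann(f)}), so your reading of the statement matches what is actually proved, and along the way you justify two points the paper leaves implicit, namely $R\subseteq I$ and $|\bar I^\perp|=|\overline X|/|\bar I|$ via self-injectivity of $\mathbb{Z}/n\mathbb{Z}$.
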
 

\medskip

\begin{proof}
    Let \( G \subset X[n] \) be a maximal isotropic subgroup with respect to \( f \), i.e. \( G = G^\perp \).
Then its image \( \overline{G} \subset \overline{X} = X[n]/R \) is also maximal isotropic, since \( \overline{f} \) is nondegenerate. Hence
\[
|\overline{G}| = \sqrt{|\overline{X}|} = \sqrt{\left| \frac{X[n]}{R} \right|}.
\]
Thus
\[
|G| = |\overline{G}| \cdot |R| = \sqrt{|R| \cdot |X[n]|}.
\]
By construction, $R = \ker M,$ so
\[
|R| \cdot |\operatorname{im} M| = |X[n]|.
\]
Therefore
\[
|G| = \frac{|X[n]|}{\sqrt{|\operatorname{im} M|}}.
\]
\end{proof} 

\begin{lemma}
\label{ann(f)}
    $\Ann(f) = \sqrt{|\im M|}$.
\end{lemma}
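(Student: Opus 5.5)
We prove $\Ann(f)=\sqrt{|\im M|}$ by identifying isogenies that kill $f$ with subgroups of $X[n]$ on which $f$ pulls back trivially, and then reducing to the size of maximal isotropic subgroups computed in the previous lemma.

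\emph{Reduction to isogenies with kernel inside $X[n]$.} If $\pi:Y\to X$ is an isogeny with $\pi^*f=0$, we first replace it by one factoring $[n]$. Every isogeny $\pi$ of degree $d$ admits a dual-type factorization $\hat\pi: X\to Y$ with $\pi\circ\hat\pi=[d]$, but $[d]$ is not $[n]$ in general. So instead we work with isogenies of the form $\pi: Y=X/K \to X$ obtained dually: every isogeny to $X$ is $Y\to X$ with $Y\cong X'/\ker$ for $X'\to Y$. The cleanest route is to use the identification $H^2(X,\mu_n)\cong \Hom(\wedge^2 X[n],\mu_n)$ together with functoriality. For an isogeny $\phi:X\to X''$, the pullback map on $H^2(-,\mu_n)$ is induced by $\phi|_{X[n]}$.

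\emph{Killing $f$ by an isogeny.} Let $\pi:Y\to X$ be an isogeny. Then $\pi^*f=0$ if and only if the restricted pairing $f(\pi(y),\pi(y'))$ on $Y[n]$ is trivial, i.e.\ the subgroup $\pi(Y[n])\subset X[n]$ is isotropic for $f$. We have $|\pi(Y[n])| = |Y[n]|/|\ker\pi\cap Y[n]| \ge n^{2g}/\deg\pi$. The previous lemma bounds the isotropic subgroup by $|X[n]|/\sqrt{|\im M|}$, so
\[
\deg\pi \ \ge\ \frac{n^{2g}}{|\pi(Y[n])|}\ \ge\ \sqrt{|\im M|}.
\]
This gives $\Ann(f)\ge\sqrt{|\im M|}$. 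The identification of $\pi^*f$ with the restricted pairing comes from the description of $H^2(X,\mu_n)$ via $X[n]$, i.e.\ via the Weil-pairing, which is compatible with isogenies.

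\emph{Construction for the upper bound.} Choose a maximal isotropic $G\subset X[n]$, so $|X[n]/G|=\sqrt{|\im M|}$. Since $[n]:X\to X$ has kernel $X[n]\supset G$, it factors as $X\xrightarrow{q} Y:=X/G\xrightarrow{\pi} X$, with $\deg\pi = n^{2g}/|G| = \sqrt{|\im M|}$. It remains to show $\pi^*f=0$, i.e.\ that $\pi(Y[n])$ is isotropic. Here we would like $\pi(Y[n])\subseteq G$. Since $\pi\circ q=[n]$, we have $\pi(Y[n])\supseteq \pi(q(X[n^2]))= nX[n^2]=X[n]$, which is too big, so this naive factorization is wrong.

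The correct choice is the dual factorization: take $\pi: Y\to X$ with $\ker\pi$ such that $\pi(Y[n])=G$. Concretely, let $Y$ be such that $X = Y/H$ with $H\subset Y[n]$, and with image of $Y[n]$ equal to $G$. Dualizing $X\to X/G$ gives $\widehat{X/G}\to\widehat X$, and the Weil pairing identifies the image of $\widehat{X/G}[n]$ in $\widehat X[n]$ with $G^{\perp}$. Combining this with a principal-polarization-free argument via Cartier duality, we choose $Y$ so that $\pi(Y[n]) = G$ and $|\ker \pi| = |X[n]|/|G|$; the equality case of the bound above shows this is possible exactly when $\ker\pi\subset Y[n]$.

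\emph{Main obstacle.} The hardest part is this existence step: producing an isogeny $\pi:Y\to X$ with $\ker\pi\subseteq Y[n]$ and $\pi(Y[n])=G$ for a prescribed subgroup $G\subseteq X[n]$. We would construct it by taking $Y:=X/G'$, where $G'\subseteq X[n]$ is chosen with $[n]$-compatibility, namely $Y = X/(G^{\perp_{\text{Weil}}})$ composed with the appropriate identification. We would check it by comparing orders. We also need to verify that the Weil-pairing description of $\pi^*$ on $H^2(-,\mu_n)$ matches restriction.
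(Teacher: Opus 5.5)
Your lower bound is correct and matches the paper's argument: if $\pi^*f=0$ then $\pi(Y[n])$ is isotropic, $|\pi(Y[n])|\ge |Y[n]|/\deg\pi$, and an isotropic subgroup has order at most $|X[n]|/\sqrt{|\operatorname{im} M|}$. The gap is the upper bound. You never produce an isogeny of degree $\sqrt{|\operatorname{im} M|}$ that kills $f$. What you offer instead (dualizing $X\to X/G$, taking $G^{\perp}$ for the Weil pairing, a ``Cartier duality'' argument, ``comparing orders'') is not a construction, and you say yourself that the existence step is still open.

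The factorization you discarded is exactly the right one, and it is the one the paper uses. Your objection rests on the inclusion $q(X[n^2])\subseteq Y[n]$, which is false. For $x\in X$ we have $n\,q(x)=q(nx)$, and this vanishes iff $nx\in G=\ker q$. Hence $q^{-1}(Y[n])=[n]^{-1}(G)$, which is a proper subgroup of $X[n^2]$ unless $G=X[n]$. Since $q$ is surjective on $k$-points and $\pi\circ q=[n]$, we get
\[
\pi(Y[n])=\pi\bigl(q([n]^{-1}(G))\bigr)=[n]\bigl([n]^{-1}(G)\bigr)=G,
\]
the last step because $[n]$ is surjective on $X(k)$. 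So for $Y=X/G$ with $G$ maximal isotropic, $\pi(Y[n])=G$ is isotropic and hence $\pi^*f=0$. By the previous lemma, $\deg\pi=\deg[n]/\deg q=|X[n]|/|G|=\sqrt{|\operatorname{im} M|}$. Note also that $\ker\pi=q(X[n])\subseteq Y[n]$, which is exactly the equality case you identified. With this correction your argument is complete and coincides with the paper's.
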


\begin{proof}
We first show that $$\operatorname{Ann}(f) \le \sqrt{|\operatorname{im} M|}.$$

Let $f : X[n] \times X[n] \to \mu_n,$
and let $G \subset X[n]$ be a maximal isotropic subgroup. Let $Y := X/G.$
Since $G \subset X[n]$, there exists an isogeny $\pi : Y \to X$ such that $[n]_X = \pi \circ q$ for some isogeny $q : X \to Y$.

We claim that $\pi(Y[n]) = G.$ Indeed, for $y \in Y$, \[y \in Y[n] \iff ny = 0 \iff \pi(ny) = 0 \iff n\pi(y) = 0,\] so $ \pi(y) \in X[n] $. Moreover, $\pi(y) = \pi(nx') = [n]x'$ for some $x \in X$, hence $ \pi(y) \in G $. Thus $\pi(Y[n]) = G$.

Now we compute: $\pi^* f (y_1,y_2) = f(\pi y_1, \pi y_2) = 1$, since $ \pi y_1, \pi y_2 \in G $ and $G$ is isotropic. Hence $\pi^* f = 0 \in H^2(Y, \mu_n)$.
By the previous lemma, $|G| = \frac{|X[n]|}{\sqrt{|\operatorname{im} M|}}.$ Therefore,
\[
\deg \pi = \frac{\deg [n]}{\deg q} = \frac{|X[n]|}{|G|} = \sqrt{|\operatorname{im} M|}.
\]
Thus $\operatorname{Ann}(f) \le \deg \pi = \sqrt{|\operatorname{im} M|}.$

Now we show $\operatorname{Ann}(f) \ge \sqrt{|\operatorname{im} M|}$.
Let $ \pi : Y \to X $ be any isogeny such that $$\pi^* f = 0 \in H^2(Y, \mu_n).$$ Then $S := \pi(Y[n])$ is an isotropic subgroup of $ X[n] $. Moreover, $\ker \pi \cap Y[n] = \ker(\pi|_{Y[n]})$, so 
\[
|\ker \pi| \ge |\ker(\pi|_{Y[n]})|
= \frac{|Y[n]|}{|\operatorname{im}(\pi|_{Y[n]})|}
= \frac{|Y[n]|}{|S|}.
\]

Since $ |Y[n]| = |X[n]| $, we get $|\ker \pi| \ge \frac{|X[n]|}{|S|}.$ Since $S$ is isotropic, $|S| \le \frac{|X[n]|}{\sqrt{|\operatorname{im} M|}},$ hence $|\ker \pi| \ge \sqrt{|\operatorname{im} M|}.$ Therefore, $\operatorname{Ann}(f) \ge \sqrt{|\operatorname{im} M|}.$

Combining both inequalities, we conclude
\[
\operatorname{Ann}(f) = \sqrt{|\operatorname{im} M|}.
\]

\end{proof}

\section{Trivializing Isogenies of $\bG_m$-gerbes}

\subsection{The Quantity $\Ind_{SH}(\alpha)$}

In this subsection, we show that $\Ind_{SH}(\alpha)$ is controlled by its lifts in $H^2(X, \mu_{\per{\alpha}})$. For more precise relation, see Corollary \ref{Ann(alpha)}.


\begin{proposition}
\label{trivializing lifts}
    Given any trivializing isogeny of $\alpha$, $\pi: Y \to X$ of degree $r$, there exists some $f \in H^2(X, \mu_r)$ that is a lift of $\alpha$ such that $\pi^* f = 0$.
\end{proposition}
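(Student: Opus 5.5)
The plan is to build the lift $f$ from a twisted vector bundle of rank $r$, namely the pushforward of a twisted line bundle along $\pi$. Since $\pi^*\alpha=0$, the pulled-back $\bG_m$-gerbe $\cY:=\cX\times_X Y$ is trivial, so it carries a $1$-twisted line bundle $\cL$. Then $\cE:=\pi_*\cL$ is a $1$-twisted locally free sheaf of rank $r=\deg\pi$ on $\cX$, and its determinant $\det\cE$ is an $r$-twisted line bundle on $\cX$.

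The first step is to define $f$. Let $\cX_r$ be the stack of $r$-th roots of $\det\cE$: its objects over $T\to X$ are pairs consisting of an object of $\cX$ over $T$ together with a trivialization of the fibre of $\det\cE$. The group $\mu_r\subset\bG_m$ acts on this stack, and this makes $\cX_r$ a $\mu_r$-gerbe; equivalently, it is the $\mu_r$-gerbe coming from the reduction of the $PGL_r$-torsor $\underline{\mathrm{Isom}}(\cO^r,\cE)/\bG_m$ to $SL_r/\mu_r$. Pushing $\cX_r$ out along $\mu_r\to\bG_m$ recovers $\cX$, so its class $f\in H^2(X,\mu_r)$ maps to $\alpha$ in $\Br(X)$. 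This is the standard Kummer/determinant construction. I may replace $\cE$ by $\cE\otimes p^*\cN$ for an untwisted line bundle $\cN$; this changes $f$ only by an element of $\NS(X)/r$, and I will use that freedom below.

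The second step is to show $\pi^*f=0$. The pullback $\pi^*\cX_r$ is the gerbe of $r$-th roots of $\pi^*\det\cE=\det(\pi^*\pi_*\cL)$. As in the proof of Theorem \ref{rank=ann}, $\pi^*\pi_*\cL\cong\bigoplus_{a\in\ker\pi}a\cdot\cL$, so
\[\det(\pi^*\cE)\cong\bigotimes_{a\in\ker\pi}a\cdot\cL.\]
Each $a\cdot\cL$ is $1$-twisted, so $a\cdot\cL\otimes\cL^{-1}$ is an untwisted line bundle on $Y$. It is algebraically equivalent to $\cO_Y$, being a translate-difference, so it lies in $\mathrm{Pic}^0(Y)$. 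Hence
\[\det(\pi^*\cE)\cong\cL^{\otimes r}\otimes P\quad\text{with }P\in\mathrm{Pic}^0(Y).\]
Since $\mathrm{Pic}^0(Y)$ is divisible, I choose $Q$ with $Q^{\otimes r}\cong P$. Then $\cN':=\cL\otimes Q$ is a $1$-twisted line bundle on $\pi^*\cX$ with $\cN'^{\otimes r}\cong\det\pi^*\cE$. The pair consisting of $\cN'$ and this isomorphism gives a global $1$-twisted line bundle on the $\mu_r$-gerbe $\pi^*\cX_r$ whose $r$-th power is the tautologically trivialized line bundle, so its slope $\delta$ is $0$. By Proposition \ref{trivial}, $\pi^*\cX_r\cong Y\times B\mu_r$, that is, $\pi^*f=0$.

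The main obstacle I expect is the bookkeeping in the second step. I have to check carefully that the descent translates $a\cdot\cL$ differ from $\cL$ by $\mathrm{Pic}^0$ classes rather than arbitrary untwisted line bundles. If this fails, I would use the $\NS(X)/r$ freedom from the first step to absorb the discrepancy. I also have to check that "an $r$-th root of $\det$ on the pulled-back root gerbe" really translates into a slope-zero $1$-twisted line bundle in the sense of the paper's definition of $\delta$, so that Proposition \ref{trivial} applies and yields triviality, not merely essential triviality.
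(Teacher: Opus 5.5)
Your route is essentially the paper's. Both arguments build the $\mu_r$-lift from $\cE=\pi_*\cL$: your root gerbe of $\det\cE$ is the paper's class of the $\mathrm{PGL}_r$-torsor $P$. Both then use $\pi^*\pi_*\cL\cong\bigoplus_{a\in\ker\pi}a\cdot\cL$ and show that $\det\pi^*\cE$ is an $r$-th power up to $\mathrm{Pic}^0(Y)$. The paper phrases the conclusion as $\partial(\alpha)=[\det E]=0$ in the snake-lemma cokernel; you conclude directly via Proposition \ref{trivial}. Your version is if anything cleaner, since the paper's computation in fact shows that this particular lift is killed. Your last step is fine: $\cL\otimes Q$ pulled back to $\pi^*\cX_r$ is $1$-twisted with trivialized $r$-th power.

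The gap is in the one step that carries all the content: your claim that $a\cdot\cL\otimes\cL^{-1}\in\mathrm{Pic}^0(Y)$ ``being a translate-difference'' is not justified by that reason. For an ordinary line bundle $M$ one gets $t_a^*M\otimes M^{-1}\in\mathrm{Pic}^0$ because $a$ moves in the connected group $Y$. On the gerbe $\cY$, however, only the finite group $K=\ker\pi$ acts. Concretely, fix $\cY\cong Y\times B\bG_m$ and write $\cL=L\otimes\chi$. The action of $a$ is then $t_a\times\mathrm{id}$ composed with tensoring by a line bundle $\cN_a$ from the descent datum of $\cX$, so
\[a\cdot\cL\otimes\cL^{-1}\cong t_a^*L\otimes L^{-1}\otimes\cN_a.\]
Nothing in your argument controls $[\cN_a]\in\NS(Y)$.

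The missing argument is short. Translation acts trivially on $\NS(Y)$, and
\[(a+b)\cdot\cL\otimes\cL^{-1}\cong t_a^*\bigl(b\cdot\cL\otimes\cL^{-1}\bigr)\otimes\bigl(a\cdot\cL\otimes\cL^{-1}\bigr),\]
so $a\mapsto[a\cdot\cL\otimes\cL^{-1}]$ is a homomorphism from the finite group $K$ to the torsion-free group $\NS(Y)$, hence zero. This is the input the paper imports from \cite[8.11]{olsson2025twisted}.

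Your fallback would not rescue the argument if this step failed. Replacing $\cE$ by $\cE\otimes p^*\cN$ multiplies $\det\cE$ by $\cN^{\otimes r}$, which does not change the root gerbe at all. Moreover, absorbing a discrepancy would require $[\det\pi^*\cE]\in r\NS(Y)+\pi^*\NS(X)$, which is exactly the statement $\partial(\alpha)=0$ being proved.
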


\begin{remark}
\label{diagram}
    Under the assumptions in the proposition, we have
\[
\begin{tikzcd}
0 \arrow[r] 
& \dfrac{\NS(X)}{r\NS(X)} \arrow[r] \arrow[d, "\pi_1"] 
& H^2(X,\mu_r) \arrow[r] \arrow[d, "\pi_2"] 
& \operatorname{Br}(X)[r] \arrow[r] \arrow[d, "\pi_3"] 
& 0 \\
0 \arrow[r] 
& \dfrac{\NS(Y)}{r\NS(Y)} \arrow[r] 
& H^2(Y,\mu_r) \arrow[r] 
& \operatorname{Br}(Y)[r] \arrow[r] 
& 0
\end{tikzcd}
\]
By the snake lemma, we have exact sequence
\[\ker(\pi_2) \rightarrow \ker(\pi_3) \xrightarrow{\partial} \operatorname{coker}(\pi_1).\]
\end{remark}

\begin{proof}
    We see that it suffices to show $\partial(\alpha)= 0$. Since $P$ is a lift of $\alpha$ and $\pi^* P \cong \bP E$, chasing through the diagram, we get $\partial(\alpha) = [\det E] \in \frac{\NS(Y)}{r\NS(Y)} \subset H^2(Y, \mu_n)$. Now we show $[\det E] = 0 \in \coker{\pi_1} \cong \frac{\NS(Y)}{r\NS(Y) + \pi^* \NS(X)}$.

    Since $\pi^* \alpha = 0$, by \cite[Proposition 3.1.3]{li2025point} $\pi_* \cL$ for any 1-twisted line bundle $\cL$ on $\cY := \cX \times_X Y \cong Y \times \BG_m$ is a semi-homogeneous vector bundle of rank $r$ on $\cX$. $\pi_* \cL$ correspond to an element $P \in H^1(X, \PGL_r)$ whose image is $\alpha$ in $\Br(X)[r]$. By the construction of $P$, we know that $\pi^* P \cong \bP E$ for $E \cong \oplus_{a \in \ker{\pi}}{a \cdot \cL} \otimes \chi^{-1}$ where $\chi$ here is the standard inertia weight on $Y \times \BG_m$. So $\det{E} = \otimes_{a \in \ker{\pi}} {a \cdot \cL} \otimes \chi^{-1}$ where $a \cdot (-)$ is the descent action. By \cite[8.11]{olsson2025twisted}, $a \cdot \cL \cong t_a^* \cN_a$ for some degree $0$ line bundle $\cN_a$, so $\det{E}$ is equivalent to ${(\cL \otimes \chi^{-1})}^{\otimes r}  \in r \NS(Y) \subset \NS(Y)$. We then conclude that $\partial(\alpha) = 0$.


\end{proof}

From now on we fix compatible identification of $\mu_n \cong \bZ / n \bZ$ for every $n$. Denote $$\Ann_n(\alpha) := \{\Ann(f), f \in H^2(X, \mu_n) \text{ whose image is $\alpha \in \Br(X)$} \}.$$

By the natrual inclusion $\bZ / n \bZ \to \bZ / tn \bZ$, we have the map \[H^2(X, \bZ / n \bZ) \xrightarrow{M \mapsto tM} H^2(X, \bZ/ tn \bZ).\] 

For $M$ a skew symmetric matrix in $H^2(X, \bZ / n \bZ)$, this map sends it to $tM \in H^2(X, \bZ / tn \bZ)$: entry-wise, $tM$ is gotten by multiplying each entry of $M$ by $t$ viewed as elements in $\bZ / tn \bZ$.

\begin{lemma}
\label{basic inequalities}
    \begin{enumerate}
        \item $\Ann_{tn}(\alpha) \leq \Ann_n(\alpha)$;
        \item suppose $\gcd(t,n) = 1$, then $\Ann_{n}(\alpha) \leq \Ann_{tn}(\alpha)$;
        \item suppose $\NS(X) \cong \bZ \cdot H$ and $H$ is principally polarized, and assume the prime factors of $t$ are contained in the prime factors of $n$, then $\Ann_n(\alpha) \leq \Ann_{tn}(\alpha)$.
    \end{enumerate}
\end{lemma}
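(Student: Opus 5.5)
For (1), I would argue by functoriality. Let $f \in H^2(X,\mu_n)$ be a lift of $\alpha$ with $\Ann(f) = \Ann_n(\alpha)$. The inclusion $\mu_n \hookrightarrow \mu_{tn}$ is compatible with both inclusions into $\bG_m$, so the image $tf \in H^2(X,\mu_{tn})$ is again a lift of $\alpha$. In matrix terms this image is the map $M \mapsto tM$ above. Any isogeny $\pi$ with $\pi^*f=0$ satisfies $\pi^*(tf)=t\,\pi^*f=0$. Hence $\Ann_{tn}(\alpha) \le \Ann(tf) \le \Ann(f)=\Ann_n(\alpha)$.

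For (2), I would use the Chinese remainder decomposition $\mu_{tn} \cong \mu_n \times \mu_t$. This gives a functorial splitting $H^2(X,\mu_{tn}) \cong H^2(X,\mu_n)\oplus H^2(X,\mu_t)$, compatible with pullback along isogenies. Write an optimal lift $F$ as $F = f + g$.
\begin{itemize}
\item The images of $f$ and $g$ in $\Br(X)$ sum to $\alpha$. The first lies in $\Br(X)[n]$ and the second in $\Br(X)[t]$.
\item We have $\alpha \in \Br(X)[n]$ and $\gcd(t,n)=1$. Comparing the $n$-primary and $t$-primary parts therefore forces $f \mapsto \alpha$ and $g \mapsto 0$.
\item If $\pi^*F=0$, then both components vanish because the splitting is functorial, so $\pi^*f=0$.
\end{itemize}
Hence $\Ann_n(\alpha)\le \Ann(f)\le \Ann(F)=\Ann_{tn}(\alpha)$.

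For (3), I would compute directly with the formula $\Ann(f)=\sqrt{|\im M|}$ from Lemma \ref{ann(f)}.
\begin{itemize}
\item Since $\NS(X)=\bZ H$ with $H$ principal, the Kummer sequence says that the lifts of $\alpha$ to $\mu_n$ are $M_0 + c'J$ for $c' \in \bZ/n$. Here $M_0$ is a fixed lift and $J$ is the matrix of the Weil pairing of $H$, which is unimodular, i.e. standard symplectic.
\item By (1), the lifts to $\mu_{tn}$ are $tM_0 + cJ$ for $c\in \bZ/tn$.
\item $|\im|$ is multiplicative over the primes $p \mid tn$ by CRT, so I compare $p$-parts prime by prime.
\item Let $k=v_p(tn)$ and $b=v_p(t)$. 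If $v_p(c)\ge b$, write $c = t c'_p$ locally. Then $tM_0+cJ = t(M_0+c'_pJ)$, and multiplication by $t$ identifies the $p$-part of its image over $\bZ/tn$ with that of $M_0+c'_pJ$ over $\bZ/n$.
\item If $a:=v_p(c)<b$, write $c = p^a u$ with $u$ a $p$-adic unit. Then $tM_0+cJ = p^a(p^{b-a}t''M_0 + uJ)$, where $t''$ is the prime-to-$p$ part of $t$ together with any leftover. The inner matrix is invertible mod $p$ since $J$ is unimodular, so the $p$-part of the image has size $p^{2g(k-a)} \ge p^{2g\,v_p(n)}$. This is the largest possible $p$-part for any $\mu_n$-lift, so such primes cannot beat any choice.
\end{itemize}
Finally, I would choose a single $c'\in\bZ/n$ by CRT:
\begin{itemize}
\item it matches $c/t$ $p$-adically at the "good" primes;
\item it is arbitrary at the "bad" primes;
\item at primes $p \mid t$ with $p\nmid n$ there is nothing to compare, and the hypothesis that the primes of $t$ divide $n$ excludes this case anyway.
\end{itemize}
This gives $|\im(M_0+c'J)|\le|\im(tM_0+cJ)|$, hence $\Ann_n(\alpha)\le\Ann_{tn}(\alpha)$.

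The main obstacle is the bookkeeping in (3). Specifically, I must justify that every $\mu_{tn}$-lift has the form $tM_0+cJ$, and carry out the local Smith-normal-form comparison when $v_p(c)<v_p(t)$. This is exactly where unimodularity of $J$, i.e. the principal polarization, is used.
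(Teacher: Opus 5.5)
Your proposal is correct and is essentially the paper's argument. Parts (a) and (b) rest on the coefficient inclusion $\mu_n \hookrightarrow \mu_{tn}$ and the CRT splitting; you phrase them through functoriality of pullback rather than through $\operatorname{Ann}(f)=\sqrt{|\operatorname{im} M|}$, which changes nothing of substance. Part (c) is the paper's prime-by-prime case analysis on the $p$-adic valuation of the $\NS$-coefficient, with unimodularity of the matrix of $H$ forcing a large image in the bad case and CRT gluing the local choices of $c'$.
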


\begin{proof}
    (a) follows from the fact that $|\im{M}| = |\im{tM}|$ for every $M$ in $H^2(X, \bZ/n \bZ)$.

    We then show (b). Let $H_i$ denote the generators of $\NS(X)$, we abuse the notation and let $M_{H_i}$ denote the corresponding matrices in  $H^2(X, \bZ / n \bZ)$, $H^2(X, \bZ/ t \bZ)$, and $H^2(X, \bZ / tn \bZ)$.

    To show the statement it suffices to show that for any linear combination $\sum a_i H_i$, $|\im{tM + \sum a_i H_i}| \geq |\im {M + \sum a_{i,n} H_i}|$, where $a_{i,n}$ is the reduction of $a_i$ in $\bZ / n \bZ$.

    By Chinese Remainder Theorem, we have that $H^2(X, \bZ / tn \bZ) \cong H^2(X, \bZ/ n \bZ) \times H^2(X, \bZ / t \bZ)$, and via this isomorphism $tM + \sum a_i H_i$ gets sent to $(M + \sum a_{i,n}H_i, \sum a_{i, t}H_i)$.

    So \[|\im{tM + \sum a_i H_i}| = |\im M + \sum a_{i,n}H_i| \cdot |\im \sum a_{i, t}H_i)| \geq |\im M + \sum a_{i,n}H_i|\] as desired.

    Now we show (c) in the following steps. Let $m=tn$, and let $M_H$ denote the corresponding matrices in $H^2(X, \bZ / n\bZ)$ and $H^2(X, \bZ / m \bZ)$. Since $H$ is principally polarized, we may choose the basis of $H^2(X, \bZ)$ so that $M_H = 
    \mathrm{diag}\!\left(
\begin{pmatrix}0 & 1 \\ -1 & 0\end{pmatrix},
\ldots,
\begin{pmatrix}0 & 1 \\ -1 & 0\end{pmatrix},
\ldots,
\begin{pmatrix}0 & 1 \\ -1 & 0\end{pmatrix}
\right).$
    Step 1. We first show the statement for $n = p^u$, $m = p^s$ where $u \leq s \in \bN$.

    Given any $f' \in H^2(X, \bZ / m \bZ)$, $f'$ corresponds to a skew symmetric matrix $M_{f'} : X[p^s] \to X^t[p^s]$, and there exists a class $[aH]$ with $a < p^s$ such that \[M_{f'} = p^{s-u} M_{f} + M_{aH}\] for some $f \in H^2(X, \bZ / n \bZ)$ whose image is $\alpha$.

    Case (1). $p \nmid a$, then $\det{M_{f'} = \det{t M_{H}}} = t^{2g}$ mod n, which means $\det{M_{f'}}$ is invertible mod $p^s$. So $M_{f'}$ is invertible and hence $|\im M_{f'}| = p^{2sg} \geq p^2g$, which is the upper bound for $|\im M_{f}|$.

    Case (2). $p \mid a$ but $p^{s-u} \nmid a$. We let $v$ be the greatest integer such that $p^v \mid a$, then $v < s-u$ and $a = rp^v$ for some $|r| < p$.

    So we have $M_{f'} = p^v(p^{(s-v-u)} M_f + r M_{aH})$, which implies that $\det{M_{f'}} = \det{r M_{H}} = r^{2g}$ which is a unit in $\bZ / {p \bZ}$. Again, this shows that $\det{M_{f'}}$ is invertible in $\bZ / {n\bZ}$, so $|\im M_{f'}| \geq |\im M_{f}|$ as in case 1.

    Case (3). $p^{s-u} \mid t$, then $a = rp^{s-u}$ for some $|r| < p$. So $M_{f'} = p^{s-u}(M_f + M_{rH})$ which then is in the image of $H^2(X, \bZ / n \bZ) \to H^2(X, \bZ / m \bZ)$. So $|\im M_{f'}| = | \im (M_{f} + M_{rH})|$. Note that $f + rH \in H^2(X, \bZ / n \bZ)$ also has image $\alpha \in \Br(X)$.

    Step 2. We now show the statement for $n = p_1^{e_1} \cdots p_k^{e_k}$, $m = p_1^{d_1} \cdots p_k^{d_k}$ where $d_i \geq e_i$.

    Similar as in step 1, given any $f' \in H^2(X, \bZ / m \bZ)$ whose image is $\alpha$, we see that \[M_{f'} = p_1^{d_1 - e_1} \cdots p_k^{d_k - e_k} M_f + a M_H\] for some $f \in H^2(X, \bZ / n \bZ)$ whose image is $\alpha$.

    By Chinese Remainder Theorem, We may view \[M_{f} = (M_{f, p_1^{e_1}}, \cdots, M_{f, p_k^{e_k}}) \in H^2(X, \bZ / p_1^{e_1} \bZ) \times \cdots \times H^2(X, \bZ / p_k^{e_k} \bZ);\] and
    \[M_{f'} = (p_1^{d_1 - e_1}M_{f, p_1^{e_1}} + a_{p_1^{d_1}} M_H, \cdots, p_k^{d_k - e_k} M_{f, p_k^{e_k}} + a_{p_k^{d_k}} M_H) \in H^2(X, \bZ / p_1^{d_1} \bZ) \times \cdots \times H^2(X, \bZ / p_k^{d_k} \bZ)\] where $a_{p_i^{d_i}}$ denotes the reduction of $a$ in $\bZ / p_i^{d_i} \bZ$.

    By step 1, we see that the fact that $|\im M_{f'} + aH|$ is the smallest implies that $p_i^{d_i - e_i} \mid a_{p_i^{d_i}}$, which is equivalent to the statement that $a = b\frac{m}{n}$ for some integer $b$. 
    
    So we see that $\Ann_m(\alpha) = |\im {M_{f'} + a H}| = |\im {M_f} + b H| \geq \Ann_n(\alpha)$ as desired.
\end{proof}

\begin{corollary}
\label{Ann(alpha)}
    Given any $\alpha \in \Br(X)$ with $\per(\alpha) = n$, we have $\Ind_{SH}(\alpha) = \min_{s \in \bN} \{\Ann_{n^s}(\alpha)\}$. 
    
    If we further assume $X$ is principally polarized with Picard rank 1 and $n$ is a prime number, then $\Ind_{SH}(\alpha) = \min \{\Ann_n(\alpha)\}$.
\end{corollary}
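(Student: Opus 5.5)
We prove the first equality by showing the two inequalities $\Ind_{SH}(\alpha) \le \min_s \Ann_{n^s}(\alpha)$ and $\Ind_{SH}(\alpha) \ge \min_s \Ann_{n^s}(\alpha)$ separately, and then use Lemma \ref{basic inequalities}(c) to collapse the minimum to $s=1$.

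\emph{Upper bound.} This is formal. If $f \in H^2(X, \mu_{n^s})$ lifts $\alpha$ and $\pi^* f = 0$, then functoriality of the Kummer sequence (Remark \ref{diagram}) gives $\pi^*\alpha = 0$. Hence $\Ind_{SH}(\alpha) \le \Ann(f)$ for every such lift, so $\Ind_{SH}(\alpha) \le \min_s \Ann_{n^s}(\alpha)$.

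\emph{Lower bound.} Take an isogeny $\pi: Y \to X$ of degree $r = \Ind_{SH}(\alpha)$ with $\pi^*\alpha = 0$. Proposition \ref{trivializing lifts} gives a lift $f \in H^2(X,\mu_r)$ of $\alpha$ with $\pi^* f = 0$, so $\Ann(f) \le r$ and therefore $\Ann_r(\alpha) \le r$. We still have to pass from modulus $r$ to a power of $n$.
\begin{enumerate}
\item Since $\pi^*\alpha=0$, the pushforward of a twisted line bundle is an $\alpha$-twisted bundle of rank $r$. So $\ind(\alpha) \mid r$, and $\per(\alpha) \mid r$ as well.
\item Write $r = r_1 r_2$, where $r_1$ is supported on the primes dividing $n$ and $\gcd(r_2, n)=1$.
\item Lemma \ref{basic inequalities}(b), applied with $t = r_2$ and modulus $r_1$, gives $\Ann_{r_1}(\alpha) \le \Ann_r(\alpha)$. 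The lift of $\alpha$ at level $r_1$ exists because $n \mid r_1$: as $n \mid r$ and $n$ is coprime to $r_2$, we get $n \mid r_1$.
\item Choose $s$ with $r_1 \mid n^s$. Lemma \ref{basic inequalities}(a) gives $\Ann_{n^s}(\alpha) \le \Ann_{r_1}(\alpha)$.
\end{enumerate}
Chaining these, $\min_s \Ann_{n^s}(\alpha) \le \Ann_{n^s}(\alpha) \le \Ann_{r_1}(\alpha) \le \Ann_r(\alpha) \le r = \Ind_{SH}(\alpha)$. Combined with the upper bound, this gives the equality.

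\emph{The case of prime $n$ with $X$ principally polarized of Picard rank 1.} Here every $n^s$ has prime factors contained in those of $n$. Lemma \ref{basic inequalities}(c) therefore gives $\Ann_n(\alpha) \le \Ann_{n^s}(\alpha)$ for all $s$, and part (a) gives the reverse inequality. So the minimum over $s$ is attained at $s=1$, and $\Ind_{SH}(\alpha) = \Ann_n(\alpha)$.

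\emph{Main obstacle.} The delicate point is the reduction from the arbitrary degree $r$ to a power of $n$. One must verify that $\Ann_m(\alpha)$ is well defined for $m = r_1$, i.e.\ that $\alpha$ lifts to $H^2(X,\mu_{r_1})$. This uses $\per(\alpha) \mid r$, which we get from the rank-$r$ twisted bundle $\pi_*\cL$ together with $\per \mid \ind$. If that route fails, the fallback is to read $n \mid r$ directly off the Kummer diagram of Remark \ref{diagram}.
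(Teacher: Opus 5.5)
Your proposal is correct and follows essentially the same route as the paper. Both arguments use Proposition~\ref{trivializing lifts} to get a lift at level $r=\deg\pi$, Lemma~\ref{basic inequalities}(b) to discard the primes coprime to $n$, and (a) to pass up to some $n^s$, with (c) collapsing the minimum to $s=1$ in the principally polarized Picard-rank-one case. You additionally spell out two points the paper leaves implicit: the functoriality upper bound, and the check that $n \mid r_1$, which guarantees that lifts at level $r_1$ exist.
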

\begin{proof}
    By Proposition \ref{trivializing lifts} and (a), (b) of Lemma \ref{basic inequalities}, we see that $\Ind_{SH}(\alpha) = \min \{\Ann(f)\}$ where $f$ ranges over lifts of $\alpha$ in $H^2(X, \mu_{tn})$ for $t \in \bZ$ with same prime factors as $n$. For any $t$, we can find an $l$ such that $tn | n^s$. We then conclude the first statement.

    The second statement then follows from (c) of Lemma \ref{basic inequalities}.
\end{proof}

\begin{corollary}
\label{divisibility}
    Given any $f \in H^2(X, \mu_{\per(\alpha)})$ whose image is $\alpha$ in $\Br(X)$, we have

    \begin{enumerate}
        \item $\per(\alpha) | \ind(\alpha) | \Ann(f) | \per(\alpha)^{g}$;
        \item $\ind(\alpha) | \Ind_{SH}(\alpha) | \per(\alpha)^e$ for some integer $e$;
        \item $\Ind_{SH}(\alpha) \leq \Ann(f)$.
    \end{enumerate}
     In particular, if $\per(\alpha)$ is a prime number, we have \[\per(\alpha) | \ind(\alpha) | \Ind_{SH}(\alpha) | \Ann(f) | \per(\alpha)^{g}.\]
\end{corollary}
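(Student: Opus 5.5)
We establish each divisibility separately, using the results already proved, and then combine them in the prime case.

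For (a): $\per(\alpha)\mid\ind(\alpha)$ is classical. For $\ind(\alpha)\mid\Ann(f)$, take a minimal trivializing isogeny $\pi:Y\to X$ of $\cX_n$ with $n=\per(\alpha)$. By Theorem \ref{Ann{f}=h(f)}, $\deg\pi=\Ann(f)$. By Theorem \ref{rank=ann}, $\pi_*\cL$ is a $1$-twisted (simple semi-homogeneous) bundle of rank $\deg\pi$ on $\cX_n$. Pushing it forward along $\iota$, i.e. using the equivalence of 1-twisted bundles on $\cX_n$ and $\cX$, gives an $\alpha$-twisted bundle of rank $\Ann(f)$. Since $\ind(\alpha)$ is the gcd of the ranks of $\alpha$-twisted bundles, $\ind(\alpha)\mid\Ann(f)$. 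For $\Ann(f)\mid\per(\alpha)^g$, use Lemma \ref{ann(f)}: $\Ann(f)=\sqrt{|\im M|}$, where $M:X[n]\to X^\vee[n]$ is skew-symmetric. The image of $M$ is a subgroup of $(\bZ/n)^{2g}$ carrying a nondegenerate alternating form, namely $\bar f$ on $X[n]/\ker M$. Such a group is of the form $A\oplus A$ with $A\subset(\bZ/n)^g$, so $|A|=\sqrt{|\im M|}$ divides $n^g$.

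For (c): the inequality $\Ind_{SH}(\alpha)\le\Ann(f)$ is immediate from the definitions. An isogeny killing $f$ kills its image $\alpha$ by functoriality of the Kummer sequence; equivalently, take $s=1$ in Corollary \ref{Ann(alpha)}.

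For (b): take a trivializing isogeny $\pi$ of $\alpha$ of degree $\Ind_{SH}(\alpha)$. For a $1$-twisted line bundle $\cL$ on $\cY\cong Y\times\BG_m$, the bundle $\pi_*\cL$ is an $\alpha$-twisted bundle of rank $\deg\pi$ (as in the proof of Proposition \ref{trivializing lifts}), so $\ind(\alpha)\mid\Ind_{SH}(\alpha)$. For the upper bound, Corollary \ref{Ann(alpha)} gives $\Ind_{SH}(\alpha)=\Ann(f')$ for some lift $f'\in H^2(X,\mu_{n^s})$. The argument of (a) shows $\Ann(f')\mid n^{sg}$, so we may take $e=sg$.

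For the prime case, we need $\Ind_{SH}(\alpha)\mid\Ann(f)$. This is the main obstacle, since in general (c) gives only an inequality. Set $p=\per(\alpha)$. By (b), $\Ind_{SH}(\alpha)$ divides a power of $p$, and by (a), $\Ann(f)$ divides $p^g$. Both are therefore powers of the same prime $p$, and for powers of a single prime the inequality $\Ind_{SH}(\alpha)\le\Ann(f)$ from (c) is equivalent to divisibility. The remaining divisibilities in the chain then follow from (a) and (b).
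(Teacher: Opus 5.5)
Your proposal is correct and follows essentially the same route as the paper: $\ind(\alpha)\mid\Ann(f)$ and $\ind(\alpha)\mid\Ind_{SH}(\alpha)$ come from pushing forward twisted line bundles along trivializing isogenies, $\Ann(f)\mid\per(\alpha)^g$ comes from $\im M\subset X^\vee[n]$, the bound by $\per(\alpha)^e$ comes from Corollary~\ref{Ann(alpha)}, and (c) comes from functoriality. You also make explicit the step the paper leaves implicit in the prime case: both $\Ind_{SH}(\alpha)$ and $\Ann(f)$ are powers of $p$, so the inequality (c) upgrades to divisibility.
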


\begin{proof}
    As mentioned before, $f$ corresponds to a skew symmetric function $X[\per(\alpha)] \to X^t[\per(\alpha)]$, so the image of $f$ is a subgroup of $X^t[\per(\alpha)]$, which means $\Ann(f) = h(f) | \per(\alpha)^g$. By definition of $\ind(\alpha)$, since $\Ann(f)$ is the rank of some vector bundle on $\cX$ (Theorem \ref{rank=ann}), we also have that $\ind(\alpha) | \Ann(f)$. This proves (a).

    let $f' \in H^2(X, \mu_{n^s})$ be the element such that $\Ind_{SH}(\alpha) = \Ann(f')$, a similar argument shows (b).

    By Corollary \ref{Ann(alpha)}, we have (c).
\end{proof}





\subsection{Example: Minimal Trivializing Isogenies for $\cX$ with Larger Degree}
Corollary \ref{Ann(alpha)} gives a formula to compute $\Ind_{SH}(\alpha)$, but unlike the $\mu_n$-gerbe case, it is not true that every minimal trivializing isogeny for $\cX$ have the same degree, in particular, there are minimal trivializing isogenies for $\cX$ with degree greater than $\Ind_{SH}(\alpha)$.

In this subsection, we give an example that illustrates this behavior. We first prove the following lemma that is useful for understanding the construction.

\begin{lemma}
\label{gp hom}
    Given an isogeny $\pi: Y \to X$ and some $P \in H^2(X, \PGL_r)$ for some r such that $\pi^* P \cong \bP E$ for some vector bundle $E$ on Y, the descent data of $\bP E$ gives rise to a group homomorphism $\ker{\pi} \to Y \times Y^t$.
\end{lemma}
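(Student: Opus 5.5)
Note first that the statement writes $H^2(X,\PGL_r)$ where $H^1$ is meant: $P$ is a $\PGL_r$-torsor. The plan is to read the descent datum of $\bP E$ as a collection of isomorphisms of vector bundles that are well defined only up to twisting by line bundles. Recording the translation together with the line-bundle class gives a point of $Y \times Y^t$ for each $a \in \ker\pi$, and we then check additivity.

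\textbf{Step 1 (descent data up to twist).} Since $\pi^*P$ descends to $X$, for each $a \in \ker\pi$ the descent datum gives an isomorphism of projective bundles $\phi_a : t_a^*\bP E \xrightarrow{\sim} \bP E$. These satisfy the cocycle condition $\phi_{a+b} = \phi_b \circ t_b^*\phi_a$. An isomorphism of projectivizations $t_a^*\bP E \cong \bP E$ lifts to an isomorphism of vector bundles
\[
t_a^*E \cong E \otimes N_a
\]
for a line bundle $N_a$ on $Y$. The class of $N_a$ in $\Pic(Y)$ is uniquely determined: if $E\otimes N \cong E \otimes N'$, taking determinants gives $(N\otimes N'^{-1})^{\otimes r} \cong \cO$. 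Finer uniqueness comes from the simplicity of the automorphisms $\phi_a$, whose lifts are unique up to scalars.

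\textbf{Step 2 (the class lies in $Y^t$).} Comparing determinants gives $t_a^*\det E \cong \det E \otimes N_a^{\otimes r}$. The left side differs from $\det E$ by an element of $\Pic^0(Y)$, since translation acts trivially on $\NS$. Hence $N_a^{\otimes r} \in \Pic^0(Y)$, so $N_a$ is numerically trivial. Since $\NS(Y)$ is torsion-free, $N_a \in \Pic^0(Y) = Y^t(k)$. Alternatively, one can use the fact (as in the proof of Proposition \ref{trivializing lifts}, via \cite[8.11]{olsson2025twisted}) that the descent action on $1$-twisted line bundles is translation up to a degree-$0$ twist. Define
\[
\Phi(a) = (a, [N_a]) \in Y \times Y^t .
\]

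\textbf{Step 3 (homomorphism).} Pulling back $t_a^*E \cong E\otimes N_a$ along $t_b$ gives
\[
t_{a+b}^*E \cong t_b^*E \otimes t_b^*N_a \cong E \otimes N_b \otimes N_a ,
\]
using $t_b^*N_a \cong N_a$ because $N_a \in \Pic^0$. The cocycle condition on the $\phi$'s ensures this composite is the lift of $\phi_{a+b}$. Uniqueness from Step 1 then yields $N_{a+b} \cong N_a \otimes N_b$, so $\Phi$ is a group homomorphism.

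\textbf{Main obstacle.} The hardest step is the well-definedness of $[N_a]$. We must exclude a different $N'_a$ with $E \otimes N_a \cong E \otimes N'_a$, which could happen when $N_a \otimes N_a'^{-1}$ is $r$-torsion and $E$ absorbs it, for instance $E = \pi'_*(\text{line bundle})$. The first thing to try is to define $N_a$ canonically through the chosen lift of $\phi_a$. Concretely, $\phi_a$ determines the line bundle
\[
N_a := \mathcal{H}om(t_a^*E, E)^{\phi_a},
\]
namely the sub-line-bundle of the endomorphism bundle spanned by lifts of $\phi_a$, which is intrinsic to $\phi_a$. With this definition the cocycle condition gives the multiplicativity in Step 3 directly, without any uniqueness claim.
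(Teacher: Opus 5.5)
Your argument in its final form, with $N_a$ defined intrinsically from $\phi_a$ as in your last paragraph, is correct. It is essentially the paper's proof, which defines $\cN_a$ by $\gamma_a^*\cO_{\bP E}(1)\cong\cO_{\bP t_a^*E}(1)\otimes q^*\cN_a$ --- the same line bundle, up to a convention-dependent inverse, as your sheaf of local lifts --- and then appeals to the cocycle condition; drop the Step~1 uniqueness claim, since the determinant computation only shows $N\otimes N'^{-1}$ is $r$-torsion. Your Step~2 also supplies the check $\cN_a\in\operatorname{Pic}^0(Y)$, which the paper leaves implicit; it is needed both for the target to be $Y\times Y^t$ and for $t_b^*\cN_a\cong\cN_a$ in the multiplicativity.
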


\begin{proof}
    Given any $a \in \ker{\pi}$, the descent data gives isomorphism $\gamma_a: \bP t_a^* E \cong \bP E$. We then must have $\gamma_a^*\cO_{\bP E}(1) \cong \cO_{\bP t_a^* E} \otimes q^* \cN_a$ where $q: \bP E \to Y$ and $\cN_a$ is some line bundle on Y. Applying $q_*(-)$, we get \[t_a^* E \cong E \otimes \cN_a.\]

    We define our map $\ker{\pi} \to Y \times Y^t$ as sending $a$ to $(a, \cN_a)$. Since descent data satisfies cocycle conditions, one can check that this map is a group (scheme) homomorphism.
\end{proof}

\begin{remark}
     When $E$ is simple semi-homogeneous, the group homomorphism maps into $\Phi(E):= \{ (a, \cN) \in Y \times Y^t| t_a^* E \cong E \otimes \cN \}$, which is a subabelian variety of $Y \times Y^t$. For more information about $\Phi(E)$, see \cite[Section 3 and 7]{mukai1978semi}. 
     
     The reason why this map exists is that $\Aut^0_{\bP E} \subset Y \times Y^t$. In fact, one can show that when $E$ is simple semi-homogeneous, $\Aut^0_{\bP E} \cong \Phi(E)$. This is discussed in more detail in a forthcoming joint work on derived categories of homogeneous projective bundles over abelian varieties with Feiyang Lin.
\end{remark}

\textbf{We now give the example.}

Let $n = p_1 \cdots p_m$ be some integer with no repeated factor (i.e. square-free,) let $A$ be an abelian variety of dimension $g$ such that $\NS(A) \cong \bZ$ with generator having polarization type $(1, 1, , \cdots, 1, n)$ (the first $g-1$ components are 1), then there exists a line bundle on $A$ with $\chi = n$. The usual Fourier Mukai transform $D(A) \cong D(A^t)$ gives a simple semi-homogeneous vector bundle $E$ on $Y := A^t$ with $\rk(E) = n$ and $\chi(E) = 1$. Note also that $Y$ has a natural dual polarization type $(1, n, \cdots, n)$ (the last g-1 components are n,) we see that this means that the generator of $\NS(Y)$ has polarization type $(1, n, \cdots, n)$.

We prove such an abelian variety $A$ exists and the fact that $\rk(E) = n$ and $\chi(E) = 1$ at the end of the subsection in Lemma \ref{existance of A}.

Let $\cL := \det{E}$. By \cite[Proposition 6.12]{mukai1978semi}, $\chi(\cL) = n^{g-1}$. Since the generator of $\NS(Y)$ has the polarization type $(1, n, \cdots, n)$, we see that $\cL$ must be the in the generating class and have the polarization type $(1, n, \cdots, n)$.

Let $\varphi: = [n] \circ \phi_{\cL}$, where $\phi_{\cL}: Y \to Y^t$ is the symmetric homomorphism given by $\cL$. We see $\varphi = \phi_{\cL^{\otimes n}}$, which then have \[\ker{\varphi \cong \bZ/{n\bZ}} \times \bZ/{n^2 \bZ} \times \cdots \times \bZ/{n^2 \bZ}.\] This is because $\cL^{\otimes n}$ has polarization type $(n, n^2, \cdots, n^2)$.

Now take $h : = (1, 0, \cdots, 0, 1) \in \ker{\varphi}$, we see $\ord(h) = n^2$, and $h \not \in \ker{\phi_{\cL}}$. Let $G:= (h) \subset \ker{\varphi}$, and let $X:= Y/G$, denote $\pi: Y \to X$. 

We claim that $\bP E$ descends to a projective bundle $P \in H^2(X, \PGL_p)$. Indeed, since we can assign the descent data $\gamma_{th}: \bP t_{th}^* E \cong \bP E$ to be the unique (since $E$ is simple) isomorphism corresponding to $t_{th}^*E \cong E \otimes N_h^{\otimes t}$ (such $N_h$ exists since $E$ is semi-homogeneous.) To check that this is a descent data we just need to check $\gamma_{n^2h} = \id$. We see $\gamma_{n^2h}$ correspond to $t_{n^2h} E \cong E \otimes N_h^{\otimes n^2}$, so it suffices to show that $N_h^{\otimes n^2} \cong \cO_Y$.

We see that $N_h^{\otimes n} \cong t_h^*{\det{E}} \otimes \det{E} = \phi_{\cL}(h)$, so $N_h^{\otimes n^2} \cong [n] \circ \phi_{\cL}(h) = 0 \in Y^t$ as desired. We also see that $[\det{E}]$ does not descend to $X$ since  $t_h^*{\det{E}} \otimes \det{E} = \phi_{\cL}(h) \neq 0$.

Let $\alpha \in \Br(X)[p]$ be the image of $P \in H^1(X, \PGL_n)$. By construction, $\pi^* \alpha = 0$, we then can apply the diagram in Remark \ref{diagram}, and similar as before, $\partial(\alpha) = [\det{E}]$. By the following lemma, we see $\partial(\alpha) = [\det{E}] \neq 0 \in \coker{\pi_1}$.

\begin{lemma}
    We have that $[\det{E}] \not \in n\NS(Y) + \pi^* \NS(X)$.
\end{lemma}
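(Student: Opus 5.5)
The strategy is to exploit that $\NS(Y)\cong \bZ\cdot[\cL]$ with $[\det E]=[\cL]$ the generator, so that everything reduces to understanding the image of $\pi^*\colon \NS(X)\to\NS(Y)$ inside $\bZ[\cL]$. Since $\pi^*$ is injective on $\NS\otimes\bQ$ for an isogeny, and $X$ is isogenous to $Y$, we get $\NS(X)\otimes\bQ\cong\bQ$. Hence $\pi^*\NS(X)=d\,\bZ[\cL]$ for some integer $d\ge1$. Then $n\NS(Y)+\pi^*\NS(X)=\gcd(n,d)\bZ[\cL]$, and the lemma is equivalent to $\gcd(n,d)\neq 1$. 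I will in fact show that every prime $p_i\mid n$ divides $d$.

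The key step characterizes which multiples $k[\cL]$ lie in $\pi^*\NS(X)$. A class $k[\cL]$ is in the image exactly when some line bundle representing $k\cL$ descends along $\pi$. Up to a degree $0$ twist, this requires that $G=\ker\pi=(h)$ be contained in $\ker\phi_{\cL^{\otimes k}}=\ker(k\phi_\cL)$. The converse needs care, but I only need the necessary direction, which is standard: if $\cM=\pi^*\cM'$, then $t_a^*\cM\cong\cM$ for $a\in\ker\pi$, so $\ker\pi\subset K(\cM)=\ker\phi_{\cM}$. Thus $d$ satisfies $k\phi_\cL(h)=0$, that is, $d$ is a multiple of the order of $\phi_\cL(h)\in Y^t$.

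It remains to compute that order. We have $\ker\phi_\cL\cong(\bZ/n)^{g-1}$, embedded in $\ker\varphi\cong\bZ/n\times(\bZ/n^2)^{g-1}$ as $0\times(n\bZ/n^2)^{g-1}$, using the elementary divisor bases compatible with the types $(1,n,\dots,n)$ and $(n,n^2,\dots,n^2)$. The order of $\phi_\cL(h)$ is the order of $h$ in $\ker\varphi/\ker\phi_\cL\cong\bZ/n\times(\bZ/n)^{g-1}$. The image of $h=(1,0,\dots,0,1)$ there is $(1,0,\dots,0,1)$, which has order exactly $n$. Hence $n\mid d$, so $\gcd(n,d)=n>1$. (This also recovers the earlier observation that $\phi_\cL(h)\ne0$.) Consequently $[\det E]=[\cL]\notin n\NS(Y)+\pi^*\NS(X)=n\bZ[\cL]$.

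The main obstacle is justifying the compatibility of the elementary-divisor decompositions, namely that $\ker\phi_\cL\subset\ker\varphi=\ker(n\phi_\cL)$ sits as stated in the chosen coordinates for $h$. I would handle this via the symplectic basis of $H_1(Y,\bZ)$ adapted to $\cL$: $\phi_\cL$ is diagonal with entries $(1,n,\dots,n)$ on each half, and both kernels are read off from the same basis. I also need $[\det E]$ to equal the generator and not its negative, which is harmless either way.
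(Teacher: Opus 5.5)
Your argument is correct, but it takes a different route from the paper's. Both proofs write $\pi^*\NS(X)=d\,\NS(Y)$ inside $\NS(Y)\cong\bZ$ and use the fact that a class coming from $X$ must satisfy $\phi(h)=0$.

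The paper extracts only $d\neq 1$ from this, which is the non-descent of $\det E$. It then gets $d\mid n^4$ from $\ker\pi\cong\bZ/n^2\bZ$, since $\pi$ is a factor of multiplication by $n^2$, and this produces a prime $p_i\mid\gcd(n,d)$. Finally it shows $[\det E]\notin p_i\NS(Y)$ by comparing $\chi(\det E)=n^{g-1}$ with $\chi(M^{\otimes p_i})=p_i^g\chi(M)$; this is the step that uses squarefreeness of $n$.

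You instead make the descent criterion quantitative: $\operatorname{ord}\phi_\cL(h)$ divides $d$, and computing this order to be exactly $n$ gives $\pi^*\NS(X)\subset n\NS(Y)$ directly. What this buys:
\begin{itemize}
\item It is sharper. It shows $\operatorname{coker}(\pi_1)=\NS(Y)/n\NS(Y)$, with $\partial(\alpha)$ a generator.
\item It does not use that $n$ is squarefree.
\item The cost is that you rely on $[\det E]$ being a generator of $\NS(Y)$, which the paper's Euler-characteristic step avoids.
\item You also need to track $h$ in compatible symplectic coordinates. The paper implicitly needs these too, to assert $h\notin\ker\phi_\cL$.
\end{itemize}

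One small correction. A polarization of type $(d_1,\dots,d_g)$ has kernel $\bigl(\bigoplus_i\bZ/d_i\bZ\bigr)^{2}$, so $\ker\phi_\cL\cong(\bZ/n\bZ)^{2g-2}$ and $\ker\varphi\cong(\bZ/n\bZ)^{2}\times(\bZ/n^2\bZ)^{2g-2}$, not the half-size groups you wrote. The quotient $\ker\varphi/\ker\phi_\cL$ is then $Y^t[n]\cong(\bZ/n\bZ)^{2g}$. The image of $h$ still has a unit coordinate (the one coming from $d_1=1$), so its order is still exactly $n$ and your computation goes through unchanged.
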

\begin{proof}
    We first show that $\pi^* \NS(X) \subset p_i\NS(Y)$ for some $p_i$ a prime factor of $n$. Since $\ker \pi \cong \bZ/{n^2 \bZ}$, we some $q: X \to Y$ such that $\pi \circ q = [n^2]_Y$, so we have $\pi^* \circ q^* = n^4$. This shows that $l = \pi^*{(1)}$ divides $n^4$. Since $[\det E]$ does not descend to $X$, we know $l \neq 1$, so we must have $p_i | n$ for some $p_i$.

    So it suffices to show that $[\det E] \not \in p_i \NS(X)$. Since $\det E$ does not descend, we see $[\det E] \neq 0 \in \NS(Y)$, so we now show $[\det E] \not \in p_i \NS(X) - 0$. This is true because any line bundle $\cM \cong M^{\otimes p_i} \in p_i \NS(X) -0$ has $\chi(\cM) = {p_i}^g \chi(M)$ which is divisible by $p_i^g$, and is nonzero since $\chi(M) \neq 0$, but this contradicts the fact that $\chi(\det E) = n^{g-1} = p_1^{g-1} \cdots p_m^{g-1}$. 
\end{proof}

This key construction gives us the following example where
\begin{itemize}
    \item  $\per(\alpha) = \ind(\alpha) = \Ind_{SH}(\alpha) < \per(\alpha)^g;$
    \item  minimal trivializing isogeny for $\cX$ with degree $> \Ind_{SH}(\alpha)$. 
\end{itemize}

\begin{example}
\label{smart construction}
Let $\alpha$ be the Brauer class we constructed above, and let $n=p$ be a prime number. So $\alpha$ is an example satisfying $\Ind_{SH}(\alpha) = p$ since it admits a rank $p$ semi-homogeneous vector bundle (that corresponds to $P$). But $\pi$ does not trivialize any of its $\mu_p$-lifts since $\partial(\alpha) \neq 0$. 

$\pi$ is an example of a minimal trivializing isogeny of $\cX$ that has degree greater than $\Ind_{SH}(\alpha)$. Indeed, if $\pi$ is not minimal, then it factors through some minimal trivializing isogeny of $\cX$ (i.e. the $\bG_m$-gerbe over $X$ that corresponds to $\alpha$) which then must have degree $p$. By Proposition \ref{trivializing lifts} such a trivializing isogeny would trivialize some of the $\mu_p$-lifts of $\cX$, which is a contradiction.

We also see that $\per(\alpha) = \ind(\alpha) = \Ind_{SH}(\alpha) = p$, and $\per(\alpha)^g > p$ when $g > 1$.
\end{example}

\begin{remark}
    This also gives a concrete construction of the following: given any prime $p$, there exists a pair $(X, \alpha)$ where $X$ is an abelian variety with dimension $g$ and $\alpha \in \Br(X)$ with $\per(\alpha) = p$ such that $\ind(\alpha) = \per(\alpha)$.
\end{remark}

Now we prove the existence of $A$ and $E$ as promised at the beginning of the example.

\begin{lemma}
\label{existance of A}
\begin{enumerate}
    \item There exists an abelian variety $A$ of dimension $g$ such that $\NS(A) \cong \bZ$ with generator $H$ having polarization type $(1, 1, , \cdots, 1, n)$ (the first $g-1$ components are 1.).
    \item Let $F: D(A) \cong D(A^t)$ be the Fourier-Mukai equivalence given by the Poincare line bundle on $A \times A^t$, we have that $E:= F(H)$ has $\rk(E) = n$, $\chi(E) =1$.
\end{enumerate}
\end{lemma}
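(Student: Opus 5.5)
For part (a), I would take a general polarized abelian variety of the given type. The moduli space $\mathcal{A}_{g,D}$ of abelian varieties of dimension $g$ with a polarization of type $D=(1,\dots,1,n)$ is nonempty, since $\bC^g$ modulo the lattice $\bZ^g \oplus D\cdot\tau$, with $\tau$ in the Siegel upper half space, carries a polarization of type $D$. This space is irreducible of dimension $g(g+1)/2$. The abelian varieties with Picard number at least $2$ form a countable union of proper closed subvarieties (Noether--Lefschetz loci): each locus is cut out by requiring an additional Hodge class, and each is proper because a very general member has $\End(A)=\bZ$. Over $k=\bC$, and more generally over an uncountable algebraically closed field, this already gives a very general point with $\NS(A)\cong\bZ$.

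It remains to check that the polarization $H$ is itself a generator, not a multiple of one. This holds because the type $(1,\dots,1,n)$ is primitive: its first elementary divisor is $1$. If $H=mH'$ with $m>1$, then the type of $H$ would be $m$ times the type of $H'$, forcing every entry to be divisible by $m$. For a general algebraically closed field of characteristic $0$, I would either use a Lefschetz-principle argument or spread out a complex example. Alternatively, the paper could simply take $k$ large enough; I expect this to be the main subtle point, although not a deep one.

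For part (b), I would use Mukai's results on the Fourier--Mukai transform of a nondegenerate line bundle. Since $H$ is ample, we have $H^i(A, H\otimes P_\xi)=0$ for $i>0$ and all $\xi\in A^t$. Hence $H$ satisfies IT$_0$, and $E=F(H)$ is a vector bundle whose fibre at $\xi$ is $H^0(A,H\otimes P_\xi)$. Therefore
\[
\rk E = h^0(H)=\chi(H)=\sqrt{\deg \phi_H}=1\cdots1\cdot n=n,
\]
by Riemann--Roch for abelian varieties. For $\chi(E)$, I would use that $F$ acts on cohomology, which gives $\chi(E)=\rk$ of the inverse transform, up to sign. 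More concretely, Mukai (Prop.~3.11 / the inverse formula $\hat F\circ F=(-1_A)^*[-g]$) shows that $E$ satisfies WIT$_g$ with $\hat{E}\cong(-1)^*H$. Then $\chi(E)$ equals $(-1)^g$ times the Euler characteristic computed fibrewise, which is $(-1)^g\rk \hat E$ up to sign, so $|\chi(E)|=\rk H = 1$. For the sign, $E^\vee$ is the bundle associated with $-\hat\phi$, and $\det E$ is anti-ample. I would therefore replace $E$ by $E^\vee$, or compose $F$ with $(-1)^*$ and a shift, to arrange $\chi(E)=1$, exactly as the paper's use of $\chi(\det E)=n^{g-1}>0$ requires.

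Finally, I would confirm that $E$ is simple semi-homogeneous. Simplicity follows because $F$ is an equivalence and $H$ is simple. Semi-homogeneity follows because $F$ intertwines translation with tensoring by $P$, so $\Phi(E)$ is $g$-dimensional.
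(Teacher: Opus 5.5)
Your proposal is correct. Part (a) follows the paper's route; part (b) takes a different one.

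\textbf{Part (a).} This is the paper's argument: take a general member of the moduli space of type $(1,\dots,1,n)$, where $\operatorname{End}(A)=\bZ$ forces $\NS(A)=\bZ$. You are more careful than the paper in two places. You insist on a \emph{very} general point, since the endomorphism-jumping locus is a countable union of proper closed subsets. You also check that $H$ is primitive, because its first elementary divisor is $1$.

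Neither you nor the paper really handles countable $k$ such as $\overline{\mathbb{Q}}$. Spreading out a complex example does not settle that case by itself, because the Picard number can jump under specialization. This is no worse than the paper's ``it suffices to take $k=\bC$''.

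\textbf{Part (b).} The paper uses Mukai's counts $\rk(E)^2=|\Phi(E)\cap(0\times Y^t)|$ and $\chi(E)^2=|\Phi(E)\cap(Y\times 0)|$. It combines them with the isomorphism $A\times A^t\cong Y\times Y^t$ induced by the Fourier--Mukai equivalence, which exchanges the two factors and carries $\Phi(H)$ onto $\Phi(E)$. The result is $\rk(E)^2=\chi(H)^2$ and $\chi(E)^2=\rk(H)^2$.

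You instead argue in two steps:
\begin{enumerate}
\item The ample $H$ is IT$_0$, so $E$ is locally free with fibres $H^0(A,H\otimes P_\xi)$ and $\rk E=\chi(H)=n$.
\item Mukai inversion shows $E$ is WIT$_g$ with $\widehat{E}\cong(-1)^*H$, hence $\chi(E)=(-1)^g\rk\widehat{E}$.
\end{enumerate}
Your route is more elementary. It also proves something the paper uses silently, namely that $F(H)$ is a vector bundle in degree $0$ rather than a complex. The paper's route fits its $\Phi$-stabilizer framework and would apply to any simple semi-homogeneous object. However, it determines $\rk$ and $\chi$ only up to sign.

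\textbf{The sign of $\chi(E)$.} Your WIT$_g$ computation needs no ``up to sign'': it gives exactly $\chi(F(H))=(-1)^g$. For $g=1$ this is the familiar fact that the transform of a degree-$n$ line bundle on an elliptic curve is stable of rank $n$ and degree $-1$. So the literal claim $\chi(E)=1$ with $E:=F(H)$ fails for odd $g$, and the paper's squared counts cannot detect this.

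Your fix of replacing $E$ by $E^\vee$ is the right one. Serre duality with trivial canonical bundle gives $\chi(E^\vee)=(-1)^g\chi(E)=1$, and $E^\vee$ is still simple semi-homogeneous of rank $n$.

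Your alternative, composing $F$ with $(-1)^*$ and a shift, does not work. The pullback $(-1)^*$ preserves $\chi$, and any shift that flips the sign makes $E$ a complex concentrated in nonzero degree.

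Downstream, the paper only uses $|\chi(E)|=1$ and the divisibility of $\chi(\det E)=\pm n^{g-1}$, so nothing later is affected.
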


\begin{proof}
    It suffices to prove this when $k = \bC$. 

    Proof of (a): By \cite[Theorem 8.2.6 and discussion above it]{lange2013complex}, we see that the moduli space $M_D$ of abelian varieties of type $D = (1, 1, \cdots, 1, n)$ is nonempty. By \cite[proposition 5.2.1]{lange2013complex}, $\NS(X) \cong \bZ$ if $\Hom(X, X) \cong \bZ$. Since the dimension of the endomorphism rings is upper continuous on $M_D$ (a flat family of abelian varieties), we can take $A$ to be a general point of $M_D$.

    Proof of (b): Denote $Y:= A^t$. Let $\Phi(E)$ (and $\Phi(H)$) be the stablizer of $[E]$ ($[H]$) in $Y \times Y^t$ ($A \times A^t$) under the action $\Auteq^0(D(Y)) \cong Y \times Y^t$ on $D(Y)$ ($A \times A^t$ on $D(A)$), this is the same as the definition in \cite[Section 3]{mukai1978semi}. By \cite[Proposition 7.1]{mukai1978semi}, $\rk(E)^2 = | \Phi(E) \cap 0 \times Y^t|$ and $\rk(H)^2 = | \Phi(H) \cap 0 \times A^t|$. By \cite[Corollary 7.6]{mukai1978semi}, $\chi(E)^2 = | \Phi(E) \times Y \times 0|$ and $\chi(H)^2 = | \Phi(H) \cap A \times 0|$.

    Now we see that $F$ induces isomorphism $\kappa: A \times A^t \cong Y \times Y^t$ such that 
    \begin{itemize}
        \item $\kappa(0 \times A^t)$ is the image of the stabilizer of skyscraper sheaves (for the natural $A \times A^t$ action), which is the stabilizer of degree 0 line bundles on $Y$. So $\kappa(0 \times A^t) = Y \times Y^t$;
        \item $\kappa(A \times 0) = 0 \times Y^t$ for the same reason;
        \item $\kappa(\Phi(H)) = \Phi(E)$.
    \end{itemize}

    So we have that $\rk(E) = \chi(H) = n$ and $\chi(E) = \rk(H) = 1$.

\end{proof}

\subsection{More Examples}

By Example \ref{smart construction}, we see that $\Ind_{SH}(\alpha)$ can be as small as $\Ind(\alpha)$, and in that example, we have $\per(\alpha) = \ind(\alpha) = \Ind_{SH}(\alpha) < \per(\alpha)^g$. In this subsection, we give examples that illustrate some more possible (in)equality relations between the quantities $\per(\alpha)$, $\Ind(\alpha)$, $\Ind_{SH}(\alpha)$, and $\per(\alpha)^g$.

\begin{example}[$\per(\alpha) < \ind(\alpha) < \Ind_{SH}(\alpha) = \per(\alpha)^g$]
\label{counterexample}

    This is \cite[Example 23.4]{hotchkiss2024period}.

    Let $(X, H)$ be a principally polarized abelian variety with $\dim X =3$ and $\NS(X) \cong \bZ$, let $n=2$. We may write \[H = x_1 \wedge y_1 + x_2 \wedge y_2 + x_3 \wedge y_3\] for some $x_i, y_i \in H^1(X, \mu_n) = \Hom(X[n], \mu_n)$.

    Let $b \in H^2(X, \mu_n)$ be the element \[b = x_1 \wedge (y_1 + y_2) + x_2 \wedge (y_1 + y_3) + x_3 \wedge (y_1 + y_2 + y_3).\] Let $\alpha$ be the image of $b$ in $\Br(X)[2]$.

    We see that the lifts of $\alpha$ in $H^2(X, \mu_n)$ are $b$ and $b+H$ since $\NS(X)/{n\NS(X)} \cong \bZ/{2 \bZ}$. We can calculate that
    \[b+H = x_1 \wedge y_2 + x_2 \wedge (y_1 + y_2 + y_3) + x_3 \wedge (y_1+ y_2).\]

    By Lemma \ref{ann(f)}, $\Ann(b) = \Ann(b+H) = 2^3$, so by Corollary \ref{Ann(alpha)}, $\Ind_{SH}(\alpha) = 2^3 = \per(\alpha)^g$. Since $\alpha \neq 0$ (otherwise $\Ind_{SH}(\alpha) = 0$,) we have $\per(\alpha) = 2$, and by \cite[Theorem 17.1]{hotchkiss2024period}, $\ind(\alpha) \leq 2^2$.
\end{example}

A variation of the example above gives the relation $\per(\alpha) < \ind(\alpha) \leq \Ind_{SH}(\alpha) < \per(\alpha)^g$.

\begin{example}[$\per(\alpha) < \ind(\alpha) \leq \Ind_{SH}(\alpha) < \per(\alpha)^g$]

    Let $(X, H)$ be an abelian variety with $\dim X =3$, $\NS(X) \cong \bZ$, and $\chi(H) = 2$ (that is, $\deg {\phi_{H}}= 2$, and equivalently, the polarization type of $H$ is (1,1,2)), let $n=2$. We may write \[H = x_1 \wedge y_1 + x_2 \wedge y_2 + 2x_3 \wedge y_3 = x_1 \wedge y_1 + x_2 \wedge y_2\] for some $x_i, y_i \in H^1(X, \mu_n) = \Hom(X[n], \mu_n)$.

    Let $b \in H^2(X, \mu_n)$ be the element \[b = x_1 \wedge (y_1 + y_2) + x_2 \wedge (y_1 + y_3) + x_3 \wedge (y_1 + y_2 + y_3).\] Let $\alpha$ be the image of $b$ in $\Br(X)[2]$.

    We see that the lifts of $\alpha$ in $H^2(X, \mu_n)$ are $b$ and $b+H$ since $\NS(X)/{n\NS(X)} \cong \bZ/{2 \bZ}$. We can calculate that
    \[b+H = x_1 \wedge y_2 + x_2 \wedge (y_1 + y_2 + y_3) + x_3 \wedge (y_1+ y_2 + y_3) = x_1 \wedge x_2 + (x_2 + x_3) \wedge (y_1 + y_2 + y_3).\]

    By Lemma \ref{ann(f)}, $\Ann(b) = 2^2, \Ann(b+H) = 2^3$, so by Corollary \ref{Ann(alpha)}, $\Ind_{SH}(\alpha) = 2^2 < \per(\alpha)^g$. Since $\alpha \neq 0$, we have $\per(\alpha) = 2$, and by \cite[Theorem 17.1]{hotchkiss2024period}, $\ind(\alpha) \leq 2^2$.
\end{example}

\section{Product of Elliptic Curves}

For simplicity, we choose isomorphisms $\mu_n \cong \bZ/{n \bZ}$. Note that we can do this for every integer $n$ since $\Char{k} = 0$.

In this section, we study the special case when $X = E_1 \times \cdots E_g$ where $E_i$ are elliptic curves. We may choose the basis $x_i, y_i \in H^1(X, \bZ)$ such that the ample generator of $\NS(E_i) \subset \NS(X)$ can be written as \[H_i = x_i \wedge y_i.\]

In the case where $E_i's$ are not isogeneous, $\NS(X) \cong \oplus_i\bZ \cdot H_i$. In general, $\oplus_i \bZ \cdot H_i \xhookrightarrow{} \NS(X)$ and other factors correspond to the graphs of isogenies $E_i \to E_j$.

Fix the basis of $H^2(X, \bZ /n \bZ)$ to be the reduction of the basis we chose above. We then have \[H_i = x_i \wedge y_i \in H^2(X, \bZ/{n\bZ})\] and hence $a_i M_{H_i}$ is the matrix
\[
\mathrm{diag}\!\left(
\begin{pmatrix}0 & 0 \\ 0 & 0\end{pmatrix},
\ldots,
\begin{pmatrix}0 & a_i \\ -a_i & 0\end{pmatrix},
\ldots,
\begin{pmatrix}0 & 0 \\ 0 & 0\end{pmatrix}
\right)
.\]

\begin{lemma}
\label{Pf}
    Let $n$ be an integer, given any $f \in H^2(X, \bZ/{n \bZ})$, $f$ corresponds to an off-diagonal skew symmetric matrix $M_f: X[n] \to X^t[n]$. If $\Pf(M_f) = 0 \in \bZ/{n\bZ}$, we have $h(f) \leq n^{g-1}$.
\end{lemma}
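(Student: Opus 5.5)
We have to show that if $\Pf(M_f)=0$ in $\bZ/n\bZ$, then $h(f)=\sqrt{|\im M_f|}\le n^{g-1}$ (Lemma \ref{ann(f)}). We work with $M:=M_f$ as a $2g\times 2g$ alternating matrix over $R=\bZ/n\bZ$, acting on $X[n]\cong R^{2g}$. The plan is to reduce to a normal form and read off the image size.

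\textbf{Reduction to prime powers.} First reduce to $n=p^s$. By the Chinese Remainder Theorem, $X[n]$ and $M$ split as products over the prime powers $p^s\| n$. The Pfaffian vanishes mod $n$ if and only if it vanishes mod each $p^s$. Also, $|\im M|$ is the product of the local image sizes. So it suffices to show $|\im M|\le p^{2s(g-1)}$ when $\Pf(M)\equiv 0 \bmod p^s$. Since $n^{g-1}=\prod (p^s)^{g-1}$, the local bounds multiply to the global one.

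\textbf{Alternating Smith normal form.} Over the local principal ideal ring $\bZ/p^s\bZ$, every alternating matrix is congruent, $M=P^{T}JP$ with $P$ invertible, to a block diagonal matrix $J=\mathrm{diag}\big(\begin{pmatrix}0&d_i\\-d_i&0\end{pmatrix}\big)_{i=1}^g$ with $d_i=p^{e_i}$ and $0\le e_i\le s$. To justify this, lift to an alternating integer matrix over $\bZ_p$, apply the standard symplectic normal form there, and reduce modulo $p^s$. Congruence does not change $|\im M|$. It multiplies $\Pf(M)$ by $\det P$, which is a unit, so the vanishing of the Pfaffian is preserved. Hence $\Pf(J)=\prod_i d_i=p^{\sum e_i}\equiv 0 \bmod p^s$.

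\textbf{Counting.} The image of $J$ is $\prod_i (d_iR)^2$, so
\[|\im M|=\prod_{i=1}^g p^{2(s-e_i)}=p^{2(sg-\sum e_i)}.\]
If some $e_i=s$, that block contributes $1$ and the others contribute at most $p^{2s}$ each, giving $|\im M|\le p^{2s(g-1)}$. Otherwise every $e_i<s$, but then $\sum e_i\ge s$, so $|\im M|\le p^{2(sg-s)}$ directly. Either way $h(f)=\sqrt{|\im M|}\le p^{s(g-1)}$.

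\textbf{Main obstacle.} The delicate point is the normal-form step. The form must exist over $\bZ/p^s\bZ$ rather than a field, and one must track that the Pfaffian changes only by the unit $\det P$. The rest is routine.
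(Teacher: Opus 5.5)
Your proposal is correct and follows essentially the same route as the paper: put $M_f$ in the alternating normal form $\mathrm{diag}\bigl(\begin{pmatrix}0&d_i\\-d_i&0\end{pmatrix}\bigr)$, use that $\Pf$ changes only by a unit so that $n \mid \prod_i d_i$, and count the image block by block. Your prime-power reduction is a small improvement. It sidesteps the paper's identity $\prod_i \frac{n}{\gcd(d_i,n)} = \frac{n^g}{\gcd(d_1\cdots d_g,n)}$, which in general holds only as the inequality $\le$ (take $M_f=0$ with $g\ge 2$), and it also gives the divisibility $h(f)\mid n^{g-1}$ that is used later in Theorem~\ref{Elliptic}.
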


\begin{proof}
    By changing basis, $M_f$ can be written as the block matrix 
    \[
\mathrm{diag}\!\left(
\begin{pmatrix}0 & d_1 \\ -d_1 & 0\end{pmatrix},
\begin{pmatrix}0 & d_2 \\ -d_2 & 0\end{pmatrix},
\ldots,
\begin{pmatrix}0 & d_g \\ -d_g & 0\end{pmatrix}
\right)
.\]

So $\im(M_f) = (\prod_{i=1}^g\frac{n}{\gcd(d_i, n)})^2 = (\frac{n^{g}}{\gcd(d_1 \cdots d_g, n)})^2$. Since $\Pf(M_f) = d_1 \cdots d_g = 0$, we have $n | d_1 \cdots d_g$, which implies that \[h(f)  | n^{g-1}\] since $h(f) = \sqrt{|\im{M_f}|}$.
\end{proof}

\begin{remark}
    This proof did not use the assumption of $X$ being a product of elliptic curves. That is, it works for any abelian variety $X$.
\end{remark}

\begin{lemma}
\label{induction of Pf}
    Let $M$ be a $2g \times 2g$ skew symmetric matrix. Then there exists $(a_1, \cdots, a_g) \in (\bZ/{n\bZ})^g$ such that $\Pf(N) = 0$ where $N = M + \sum_{i=1}^g a_i M_{H_i}$
\end{lemma}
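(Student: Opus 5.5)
The plan is to prove a stronger statement by induction on $g$: for every $2g\times 2g$ skew symmetric matrix $M$ over $\bZ/n\bZ$ and every target $c\in\bZ/n\bZ$, there exist $(a_1,\dots,a_g)\in(\bZ/n\bZ)^g$ such that $\Pf(M+\sum_i a_iM_{H_i})=c$. The lemma is the case $c=0$. Strengthening is needed because the induction step requires the Pfaffian of a smaller block to equal a unit, namely $1$, rather than $0$.

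The structural input is that $a_iM_{H_i}$ only changes the $(2i-1,2i)$ entry, by $+a_i$, and the $(2i,2i-1)$ entry, by $-a_i$. Since the Pfaffian is a universal integer polynomial in the entries above the diagonal, every identity below holds over $\bZ/n\bZ$. Write $N=M+\sum_i a_iM_{H_i}$ and expand along the first row:
\[
\Pf(N)=\sum_{j=2}^{2g}(-1)^{j}N_{1j}\,\Pf\big(N_{\hat1\hat j}\big),
\]
where $N_{\hat1\hat j}$ deletes rows and columns $1$ and $j$. The variable $a_1$ appears only in the entry $N_{12}=m_{12}+a_1$. Every minor $N_{\hat1\hat j}$ has row and column $1$ deleted, so it does not contain $a_1$. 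Hence
\[
\Pf(N)=(m_{12}+a_1)\,\Pf(N')+R,
\]
where $N'=N_{\hat1\hat2}$ is the lower-right $(2g-2)\times(2g-2)$ block and $R$ depends only on $M$ and $a_2,\dots,a_g$. The block $N'$ is $M'+\sum_{i\ge2}a_iM'_{H_i}$, where $M'$ is the corresponding block of $M$ and the $M'_{H_i}$ are again the standard matrices for the remaining $g-1$ factors. So $N'$ has exactly the shape required by the induction hypothesis.

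Base case $g=1$: here $\Pf(N)=m_{12}+a_1$, so I take $a_1=c-m_{12}$.

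Induction step: by the hypothesis applied to $M'$ with target $1$, I choose $a_2,\dots,a_g$ so that $\Pf(N')=1$. This fixes $R$, and then $\Pf(N)=m_{12}+a_1+R$. Setting $a_1=c-m_{12}-R$ gives $\Pf(N)=c$.

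The only point needing care is the claim that $a_1$ enters $\Pf(N)$ linearly with coefficient exactly $\Pf(N')$. This is where the off-diagonal block form of the $M_{H_i}$ is used, and I would check the sign convention in the row expansion once. I do not expect any genuine obstacle beyond this.
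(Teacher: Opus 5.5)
Your proof is correct, but it is organized differently from the paper's. The paper does not strengthen the statement. In its induction step it first chooses $a_{g+1}=1-c_{(2g+1)(2g+2)}$, which turns the last diagonal $2\times 2$ block into $J$. It then applies the Pfaffian Schur-complement identity $\Pf\begin{pmatrix} M' & B\\ -B^T & J\end{pmatrix}=\Pf(J)\,\Pf(M'+BJ^{-1}B^T)$. Since $B$ does not involve any $a_i$, the complement equals $(A+BJ^{-1}B^T)+\sum_{i\le g}a_iM_{H_i}$, which has exactly the original shape, and the inductive hypothesis with target $0$ finishes the argument.

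You run the recursion the other way round. You make the complementary $(2g-2)$-block have Pfaffian $1$ recursively, and then solve linearly for the single remaining variable $a_1$. For this you need only the first-row expansion and the fact that $a_1$ occurs in just one entry above the diagonal.

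The paper's route keeps the target at $0$, at the cost of the Schur-complement formula. That formula is valid over $\bZ/n\bZ$ because $J^{-1}=-J$ is integral, so it is a polynomial identity. The formula is also what absorbs the off-diagonal block $B$ into a new constant matrix. Your route is more elementary and is insensitive to sign conventions, since the coefficient of $a_1$ is $\pm 1$. It also proves the stronger fact that $(a_1,\dots,a_g)\mapsto\Pf(N)$ is surjective onto $\bZ/n\bZ$, which the paper's argument would yield as well, since $\Pf(J)=1$.
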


\begin{proof}
    We induct on $g$. Denote the $ij$-th entry of $M$ by $c_{ij}$.

     Base case: when $g = 1$, then 
     \[M = \begin{pmatrix}
         0 & c_{12}+a_1 \\
         -c_{12}-a_1 & 0
     \end{pmatrix}\]
    So letting $a_1 = -c_{12}$ proves the claim for the rank 2 ($g=1$) case.

     Now we suppose the claim is true for rank $2g$ and we now show it is true for rank $2g+2$. By the inductive hypothesis, we see that by adding some $\sum_{i=1}^g a_i M_{H_i}$ to any skew symmetric matrix of rank $2g$, we can get a resulting matrix with 0 Pfaffian.

     Induction Step:
    Choose $a_{g+1} = -c_{(2g+1)(2g+2)}+1$, we may write \[N = 
    \begin{pmatrix}
        M' & B\\
        -B^T & J
    \end{pmatrix}.
    \]  Here $M' = A + \sum_{i=1}^{g-1} a_i M_{H_i}$ where $A$ is an off-diagonal skew symmetric $2g \times 2g$ matrix ($A$ is the upper left $2g \times 2g$ block of $M$), $B$ is a $2g \times 2$ matrix independent of the $a_i$'s, and $J = 
    \begin{pmatrix}
        0 & 1\\
        -1 & 0
    \end{pmatrix}$. 

    So \[\Pf(N) = \Pf(J) \Pf(M' + B J^{-1} B^T) = \Pf(J) \Pf(A + B J^{-1} B^T +\sum_{i=1}^g a_i M_{H_i}).\]

    By inductive hypothesis, we see that we may choose $(a_1, \cdots a_g)$ such that $$\Pf(A + B J^{-1} B^T +\sum_{i=1}^g a_i M_{H_i}) = 0,$$ which then implies that $\Pf(N) = 0$.
\end{proof}

\begin{lemma}
\label{orbit}
    Given any $f \in H^2(X, \mu_n) \cong H^2(X, \bZ/{n\bZ})$, there exists $(a_1, \cdots, a_g) \in (\bZ/{n\bZ})^g$ such that $h(f')| n^{g-1}$ where $f' = f + \sum_{i=1}^g a_i H_i$.
\end{lemma}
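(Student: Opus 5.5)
The plan is to combine Lemma \ref{induction of Pf} with Lemma \ref{Pf}, after translating the class $f$ into its skew-symmetric matrix.

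First, I fix the basis $x_i, y_i$ of $H^1(X,\bZ)$ chosen at the start of this section and reduce it mod $n$. In this basis $f \in H^2(X,\bZ/n\bZ)$ corresponds to a $2g\times 2g$ skew-symmetric matrix $M = M_f$ with entries in $\bZ/n\bZ$, representing the pairing $X[n]\to X^t[n]$. The assignment $f\mapsto M_f$ is additive, so adding $\sum a_i H_i$ to $f$ changes the matrix to
\[M_{f'} = M_f + \sum_{i=1}^g a_i M_{H_i},\]
where $M_{H_i}$ is the block-diagonal matrix with the block $\begin{pmatrix}0&1\\-1&0\end{pmatrix}$ in the $i$-th slot, as displayed above. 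The $H_i$ are classes in $\NS(X)$, so $f'$ still lifts the same Brauer class, although this is not needed for the statement itself.

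Next, I apply Lemma \ref{induction of Pf} to $M = M_f$. It gives $(a_1,\dots,a_g)\in(\bZ/n\bZ)^g$ with $\Pf(M_f+\sum a_iM_{H_i}) = 0$ in $\bZ/n\bZ$. With $f' = f+\sum a_i H_i$ this says $\Pf(M_{f'})=0$. Lemma \ref{Pf} then yields $h(f')\le n^{g-1}$.

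The statement asks for divisibility rather than an inequality. Its proof in fact shows $h(f')=n^g/\gcd(d_1\cdots d_g,n)$ after putting $M_{f'}$ into block-diagonal form with blocks $d_i$. Since $n\mid d_1\cdots d_g$, we get $h(f')\mid n^{g-1}$.

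The main obstacle is making this normal-form step rigorous over $\bZ/n\bZ$. One has to check three things:
\begin{enumerate}
\item A skew-symmetric form over $\bZ/n\bZ$ admits a symplectic (Smith-type) normal form under a change of basis $P$.
\item $|\im M|$ is invariant under such changes of basis.
\item $\Pf$ changes by $\det P$, which is a unit, so the condition $\Pf=0$ is preserved.
\end{enumerate}
For the first point I would reduce to prime powers $n=p^e$ via the Chinese Remainder Theorem and run the standard symplectic Gram--Schmidt over the local ring $\bZ/p^e$. The divisibility then follows prime by prime: $|\im M|_p$ is a power of $p$, and $v_p(\prod d_i)\ge e$ forces $h(f')_p \mid p^{e(g-1)}$.

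I would also check the Schur-complement step inside Lemma \ref{induction of Pf}. The inductive hypothesis only provides the $a_i$ for $i\le g$ in the smaller $2g\times 2g$ block, while $a_{g+1}$ is fixed to make $J$ invertible. So one needs $\Pf(J)$ to be a unit, which holds because $\Pf(J)=1$, and one needs the Schur complement to stay skew-symmetric.
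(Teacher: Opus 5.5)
Your proposal is correct and follows the paper's proof exactly: use Lemma \ref{Pf} to reduce to finding $a_i$ with $\Pf(M_{f'})=0$, then invoke Lemma \ref{induction of Pf}. One small correction is that the normal form actually gives $h(f') = n^g/\prod_i \gcd(d_i,n)$ rather than $n^g/\gcd(d_1\cdots d_g,n)$ (for the zero matrix with $g\ge 2$ one has $h=1$), but since $\gcd(d_1\cdots d_g,n)$ divides $\prod_i\gcd(d_i,n)$ the conclusion $h(f')\mid n^{g-1}$ still holds, as your prime-by-prime check confirms.
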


\begin{proof}
    By Lemma \ref{Pf}, it suffices to show that there exists $(a_1, \cdots, a_g) \in (\bZ/{n\bZ})^g$ such that $\Pf(M_g) = 0$. We then conclude by Lemma \ref{induction of Pf}.
\end{proof}

\begin{theorem}
\label{Elliptic}
    Let $ X = E_1 \times \cdots E_g$. Given any $\alpha \in \Br(X)$, let $n = \per(\alpha)$. Then, there exists a lift $f \in H^2(X, \mu_n) \cong H^2(X, \bZ/{n\bZ})$ of $\alpha$ such that $\Ann(f) | n^{g-1}$. In particular, $\ind(\alpha) | n^{g-1}$.
\end{theorem}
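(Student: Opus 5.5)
The plan is to start from an arbitrary lift of $\alpha$ and modify it by elements of $\NS(X)/n\NS(X)$. By the Kummer sequence
\[0 \to \NS(X)/n\NS(X) \to H^2(X,\mu_n) \to \Br(X)[n] \to 0,\]
and since $n = \per(\alpha)$, there is some $f_0 \in H^2(X,\mu_n)$ mapping to $\alpha$. Every other lift has the form $f_0 + \lambda$ with $\lambda$ in the image of $\NS(X)/n\NS(X)$. In particular, for any $(a_1,\dots,a_g) \in (\bZ/n\bZ)^g$, the class $f_0 + \sum_i a_i H_i$ is again a lift of $\alpha$, because each $H_i = x_i \wedge y_i$ is the reduction of an honest class in $\NS(X)$ (the pullback of the ample generator of $\NS(E_i)$). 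I will therefore only use the subgroup $\bigoplus_i \bZ H_i \subset \NS(X)$, and never the isogeny-graph classes.

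Next I apply Lemma \ref{orbit} to $f_0$. Written in the fixed basis, $f_0$ is a skew-symmetric matrix $M_{f_0}$ over $\bZ/n\bZ$. Lemma \ref{induction of Pf} supplies $a_i$ with $\Pf(M_{f_0} + \sum a_i M_{H_i}) = 0$. Lemma \ref{Pf} then gives $h(f) \mid n^{g-1}$ for $f := f_0 + \sum a_i H_i$.

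By Lemma \ref{ann(f)} (equivalently Theorem \ref{Ann{f}=h(f)}), $\Ann(f) = h(f)$, so $\Ann(f) \mid n^{g-1}$. This proves the first assertion.

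For the second assertion I use Corollary \ref{divisibility}(a): $\ind(\alpha) \mid \Ann(f)$ for any lift $f \in H^2(X,\mu_{\per(\alpha)})$. The reason is that by Theorem \ref{rank=ann} there is a $1$-twisted simple semi-homogeneous bundle of rank $\Ann(f)$ on $\cX_n$, hence an $\alpha$-twisted bundle of that rank on $\cX$. Therefore $\ind(\alpha) \mid n^{g-1}$.

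The main point needing care is the Pfaffian step. Two things must be checked: that the Schur-complement induction in Lemma \ref{induction of Pf} is valid over $\bZ/n\bZ$, which it is because $J$ is invertible; and that the diagonalization in Lemma \ref{Pf} over $\bZ/n\bZ$ gives $\Pf = \pm\, d_1\cdots d_g$ up to a unit, so that vanishing of the Pfaffian forces $n \mid d_1\cdots d_g$. For the latter I would use the Smith normal form for alternating matrices over $\bZ/n\bZ$, working prime-by-prime via CRT so that it takes place over the local rings $\bZ/p^e$. The divisibility $\prod_i n/\gcd(d_i,n) \mid n^{g-1}$ then follows from $n \mid \prod d_i$, using $\gcd$-multiplicativity at each prime.
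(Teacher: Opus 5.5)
Your proposal is correct and is essentially the paper's proof: you modify an arbitrary lift by $\sum_i a_i H_i$ via Lemma \ref{induction of Pf} and Lemma \ref{Pf} (that is, Lemma \ref{orbit}), then conclude with $\Ann(f)=h(f)$ and Corollary \ref{divisibility}. Your prime-by-prime treatment of the normal form in Lemma \ref{Pf} is also a genuine repair. The paper's identity $\prod_i \gcd(d_i,n)=\gcd(d_1\cdots d_g,n)$ fails in general, whereas the divisibility $n \mid \prod_i \gcd(d_i,n)$, which is all that is needed, does hold.
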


\begin{proof}
    Let $g$ be an arbitrary lift of $\alpha$. By Lemma \ref{orbit}, we see that there is always $f \in g + \NS(X)/{n \NS(X)}$ such that $h(f) | n^{g-1}$. That is, there is always a lift $f$ such that $\Ann(f) = h(f) | n^{g-1}$. By Corollary \ref{divisibility}, $\ind(\alpha) | n^{g-1}$.
\end{proof}





    




\printbibliography
\end{document}